\theoremstyle{plain}
\newtheorem{corollary}{Corollary}
\newtheorem{lemma}{Lemma}
\newtheorem{proposition}{Proposition}
\newtheorem{remark}{Remark}
\newtheorem{theorem}{Theorem}
\numberwithin{equation}{section}
\begin{document}
\title[Zeros of the deformed exponential function]
 {Zeros of the deformed exponential function}

\author{Liuquan Wang and Cheng Zhang}

\address{Division of Mathematics, Nanyang Technological University, 21 Nanyang Link, Singapore 637371, Singapore}
\email{mathlqwang@163.com;wanglq@ntu.edu.sg}

\address{Department of Mathematics, Johns Hopkins University, Baltimore, MD 21218, United States}
\email{czhang67@jhu.edu}

\subjclass[2010]{Primary 30C15, 11B83, 41A60; Secondary 11M36, 34K06}

\keywords{Deformed exponential function; asymptotic expansion; Eisenstein series; Bernoulli numbers}

\dedicatory{}

\maketitle

\begin{abstract}
Let $f(x)=\sum_{n=0}^{\infty}\frac{1}{n!}q^{n(n-1)/2}x^n$  ($0<q<1$) be the deformed exponential function. It is known that the zeros of $f(x)$ are real and form a negative decreasing sequence $(x_k)$ ($k\ge 1$). We investigate the complete asymptotic expansion for $x_{k}$ and prove that for any $n\ge1$, as $k\to \infty$,
\begin{align*}
x_k=-kq^{1-k}\Big(1+\sum_{i=1}^{n}C_i(q)k^{-1-i}+o(k^{-1-n})\Big),
\end{align*}
where $C_i(q)$ are some $q$ series which can be determined recursively. We show that each $C_{i}(q)\in \mathbb{Q}[A_0,A_1,A_2]$, where $A_{i}=\sum_{m=1}^{\infty}m^i\sigma(m)q^m$ and $\sigma(m)$ denotes the sum of positive divisors of $m$. When writing $C_{i}$ as a polynomial in $A_0, A_1$ and $A_2$, we find explicit formulas for the coefficients of the linear terms by using Bernoulli numbers. Moreover, we also prove that $C_{i}(q)\in \mathbb{Q}[E_2,E_4,E_6]$, where $E_2$, $E_4$ and $E_6$ are the classical Eisenstein series of weight 2, 4 and 6, respectively.
\end{abstract}

\section{Introduction}
Let $0<q<1$. We consider the functional differential equation
\begin{align}\label{FDE}
y'(x) = y(qx).
\end{align}
If $y(0)=1$, then \eqref{FDE} has the unique solution
\begin{equation}\label{fx}
y=f(x) := \sum\limits_{n = 0}^\infty  {\frac{{{x^n}}}{{n!}}{q ^{ n(n - 1)/2}}}.
\end{equation}
The function $f(x)$ is called the deformed exponential function since when $q\rightarrow 1$, $f(x)\to e^{x}$.
It appears naturally and frequently in pure mathematics as well as statistical physics. For instance, the function $f(x)$ relates closely to the generating function for Tutte polynomials of the complete graph $K_n$ in combinatorics, the Whittaker and Goncharov constants in complex analysis, and the partition function of one-site lattice gas with fugacity $x$ and two-particle Boltzmann weight $q$ in statistical mechanics \cite{scott2005}.

Among all the mysterious properties of $f(x)$, people are extremely interested in the zeros of $f(x)$. In 1972, Morris et al. \cite{morris} showed that $f(x)$ is an entire function of order zero. Moreover, by using  a theorem of Laguerre, they proved that $f(x)$ has infinitely many real zeros and these zeros are all negative and simple. They also proved that there is no other zero for the analytic extension (to the complex plane) of $f(x)$ by using the so-called multiplier sequence (a modest gap in their proof was filled by Iserles \cite{iserles}).
Therefore, the zeros of $f(x)$ form one strictly decreasing sequence of negative numbers $(x_k)$ ($k\ge 1$). We remark that in some previous works (e.g., \cite{langley,sokal2012}), the subscripts of the sequence start with 0 rather than 1. In this paper, as well as in \cite{Zhang}, the subscripts start with 1 for the elegance of notation.

Some conjectures on the zeros $x_k$ ($k\ge 1$) have been proposed in \cite{morris,iserles,Robinson}. For example, Morris et al. \cite{morris} conjectured that
\begin{align}\label{Morris-conj}
\lim\limits_{k\to \infty}\frac{x_{k+1}}{x_k}=\frac{1}{q}.
\end{align}
In 1973, Robinson \cite{Robinson} also derived \eqref{FDE} when counting the labeled acyclic digraphs. He speculated that
\begin{align}
x_{k}=-kq^{1-k}+o(q^{1-k}).
\end{align}
These conjectures have been investigated by several authors (see e.g., \cite{Grabner,langley,liu,sokal2012}). In particular,  Langley \cite{langley} showed that as $k\to\infty$
\begin{equation}\label{ratio}
\frac{x_{k+1}}{x_{k}}=\frac1q\Big(1+\frac1k\Big)+o(k^{-2}).
\end{equation}
He also proved that there exists a positive constant $\gamma$, which is independent of $k$, such that
\begin{equation}\label{xnlangley}
x_{k}=-kq^{1-k}(\gamma+o(1)).
\end{equation}
As a consequence, \eqref{Morris-conj} is true. Around 2009, more interesting conjectures on the zeros were introduced by Sokal \cite{sokal2009} by allowing $q$ to be in the unit disk of the complex plane.

Recently, Zhang \cite{Zhang} refined Langley's work and proved that the constant in \eqref{xnlangley} is $\gamma=1$. Moreover, he improved \eqref{xnlangley} and showed that as $k\to\infty$,
\begin{align}\label{Zhang-eq}
x_k=-kq^{1-k}\Big(1+\sum_{m=1}^\infty\sigma(m)q^mk^{-2}+o(k^{-2})\Big).
\end{align}
Here for any positive integer $n$,
\begin{align*}
\sigma(n):=\sum_{d|n,\ d>0}d.
\end{align*}
 Later Derfel et al. \cite{DGT} studied the asymptotic behaviours of the zeros of solutions of \eqref{FDE} with different initial conditions instead of the restriction $y(0)=1$.

In this paper, we find a complete asymptotic expansion formula for $x_{k}$. To be more specific, we will approximate $x_{k}$ with remainder term $o(k^{-n-1})$ for any $n\ge 1$. This extends \eqref{Zhang-eq} to the most general situation. To state our result, we define for $i\ge 0$,
\[A_i=A_i(q):=\sum_{m=1}^\infty m^i\sigma(m)q^m.\]
\begin{theorem}\label{thm1}
Let $n\ge1$. Then  as $k\to \infty$,
\begin{align}\label{xk-thm}
x_k=-kq^{1-k}\Big(1+\sum_{i=1}^{n}C_i(q)k^{-1-i}+o(k^{-1-n})\Big),
\end{align}
where each $C_i(q)$ is a multivariate polynomial of $A_0,A_1,...,A_{i-1}$ with rational coefficients. This polynomial can be determined recursively.
\end{theorem}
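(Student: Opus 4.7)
The plan is to derive an implicit equation for the relative deviation $\eta_k := -x_k q^{k-1}/k - 1$ and then to invert it as a formal expansion in $1/k$. Substituting $x_k = -kq^{1-k}(1+\eta_k)$ into $f(x_k)=0$, dividing the defining series of $f$ by its dominant term $t_k := x_k^k q^{k(k-1)/2}/k!$ (at $n=k$), and reindexing $n = k+j$, one obtains the implicit relation
\begin{equation*}
\Phi(\eta_k,k) := 1 + \sum_{j=1}^{\infty}(-1)^j \alpha_j(k)\, q^{j(j+1)/2}(1+\eta_k)^j + \sum_{m=1}^{k}(-1)^m \beta_m(k)\, q^{m(m-1)/2}(1+\eta_k)^{-m} = 0,
\end{equation*}
where $\alpha_j(k) = \prod_{i=1}^j (1+i/k)^{-1}$ and $\beta_m(k) = \prod_{i=0}^{m-1}(1-i/k)$. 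Expanding each factor in $1/k$ yields a formal expansion $\Phi(\eta,k) = \Phi_0(\eta) + \sum_{r\geq 1}\Phi_r(\eta)k^{-r}$; the Gaussian decay of $q^{j(j+1)/2}$ and $q^{m(m-1)/2}$ ensures each $\Phi_r(\eta)$ is absolutely convergent for $\eta$ near $0$.

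The limit $\Phi_0(\eta) = \sum_{n\in\mathbb{Z}}(-1)^n q^{n(n+1)/2}(1+\eta)^n$ is a theta-type sum, and Jacobi's triple product identity gives
\begin{equation*}
\Phi_0(\eta) = \frac{\eta}{1+\eta}\prod_{j=1}^{\infty}(1-q^j)\bigl(1-(1+\eta)q^j\bigr)\bigl(1-q^j/(1+\eta)\bigr),
\end{equation*}
so $\Phi_0$ has a simple zero at $\eta=0$. Pairing the $j$-th term of the first sum with the $(j+1)$-st term of the second (both carry the exponent $q^{j(j+1)/2}$) further shows $\Phi_1(0)=0$; this cancellation is what forces the expansion of $\eta_k$ to begin at order $k^{-2}$, as claimed in the theorem.

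The coefficients $C_i(q)$ will then be determined recursively by substituting the ansatz $\eta_k = \sum_{i=1}^n C_i(q)k^{-1-i} + o(k^{-1-n})$ into $\Phi(\eta_k,k) = 0$, expanding in Taylor series in $\eta$ about $0$, and matching coefficients of successive powers of $1/k$. The coefficient of $k^{-n-1}$ produces a linear equation
\begin{equation*}
\Phi_0'(0)\, C_n(q) = -R_n\bigl(C_1(q),\ldots,C_{n-1}(q)\bigr),
\end{equation*}
whose right-hand side is a polynomial in the previously computed $C_i$ with coefficients drawn from the constants $\Phi_r^{(\ell)}(0)/\ell!$ for $r+\ell \leq n+1$. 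Logarithmic differentiation of the triple-product form of $\Phi_0$, together with the explicit $1/k$-expansions of $\alpha_j$ and $\beta_m$, expresses each such constant as a polynomial over $\mathbb{Q}$ in Lambert-type sums $\sum_{j\geq 1}q^{aj}/(1-q^j)^b$. Because the ring $\mathbb{Q}[A_0,A_1,\ldots]$ is a $(q\partial_q)$-differential ring (via $A_{j+1}=q\partial_q A_j$) containing every such Lambert sum, and because the number of $q\partial_q$-applications needed at stage $n$ is bounded by $n-1$, it follows that $C_n\in\mathbb{Q}[A_0,\ldots,A_{n-1}]$.

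The main obstacle is upgrading this formal recursion to a genuine asymptotic expansion with remainder $o(k^{-1-n})$. The plan is to run induction on $n$, taking Zhang's estimate \eqref{Zhang-eq} as the base case: assuming the expansion holds to order $n-1$, write $\eta_k = \sum_{i=1}^{n-1}C_i(q)k^{-1-i} + \delta_k$ with $\delta_k = o(k^{-n})$, substitute into $\Phi(\eta_k,k)=0$, bound the tails of both the $j$- and $m$-sums uniformly in $k$ and small $\eta$ by means of the Gaussian factors, and apply a contractive implicit function argument to show $\delta_k = C_n(q)k^{-1-n} + o(k^{-1-n})$. This simultaneously identifies $C_n(q)$ and extends the asymptotic expansion by one further order, closing the induction.
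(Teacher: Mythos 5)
Your analytic route is genuinely different from the paper's and is workable: rather than the paper's two-sided sign-change argument (Lemma \ref{lemmavj} on the near-monotonicity of the paired terms $v_j$, the sign Lemma, the Intermediate Value Theorem, and Langley's \eqref{ratio} to identify the root), you substitute the true zero into an exact implicit equation $\Phi(\eta_k,k)=0$ and invert it order by order, starting the induction from \eqref{Zhang-eq}. Your normalization makes the error control easier, since absolute-value bounds with the Gaussian factors suffice (no alternating-sum cancellation must be preserved), at the price of importing Zhang's estimate as the base case. Your $\Phi_0'(0)=\prod_{j\ge1}(1-q^j)^3$ obtained from the triple product is exactly the paper's $P_0$ via Jacobi's identity, the verification that $\Phi_1(0)=0$ is correct, and the contraction/implicit-function step can be made rigorous along the lines you sketch.

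The genuine gap is in the algebraic half of the statement, namely that each $C_n$ lies in $\mathbb{Q}[A_0,\dots,A_{n-1}]$ with rational coefficients. Your recursion gives $C_n=-R_n/\Phi_0'(0)$, where $R_n$ is assembled from constants $\Phi_r^{(\ell)}(0)$; these are theta-type sums $\sum_n(-1)^n\pi(n)q^{n(n+1)/2}$ with polynomial weights $\pi$, not Lambert sums, and the justification you offer --- that the differential ring $\mathbb{Q}[A_0,A_1,\dots]$ contains every sum $\sum_{j\ge1}q^{aj}/(1-q^j)^b$ --- is false: for $a=b=1$ this is $\sum_m d(m)q^m$, which is not quasimodular, whereas $\mathbb{Q}[A_0,A_1,\dots]=\mathbb{Q}[A_0,A_1,A_2]\subset\mathbb{Q}[E_2,E_4,E_6]$. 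What you actually need, and never prove, is that each constant $\Phi_r^{(\ell)}(0)$ is divisible by $P_0=\prod_{j\ge1}(1-q^j)^3$ with quotient a rational polynomial in the $A_i$; without this, your recursion only defines some $q$-series $C_n$ and the polynomial structure (and rationality) claimed in the theorem is unproved. The paper supplies precisely this ingredient in two steps: the symmetry $j\mapsto 1-j$ (in your variables $n\mapsto -n-1$) forces the surviving weights to be of the form $(2n+1)$ times a polynomial in $n(n+1)$ (Proposition \ref{Delta-uv-exp}), and then $\Theta^m(P_0)=-3P_0P_m$ with $P_m\in\mathbb{Q}[A_0,\dots,A_{m-1}]$ (Lemma \ref{P-lemma}) converts such sums into $P_0$ times polynomials in the $A_i$. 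The same fix is available in your framework --- since $\beta_m(k)=\Gamma(k+1)k^{-m}/\Gamma(k-m+1)$ is the continuation of $\alpha_n(k)$ to negative $n$, each $\Phi_r(\eta)$ is a single sum over $n\in\mathbb{Z}$ and antisymmetrizes automatically --- but this is the missing key step, not a routine consequence of ``Lambert-type sums''.
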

The recursive relation and the structure of these polynomials will be presented in Sections \ref{proof} and \ref{representation}. For example, $C_1=A_0$, $C_2=-A_1$, $C_3=-\frac1{10}A_0+\frac35A_1+\frac12A_2-\frac{13}{10}A_0^2$. It is clear that \eqref{Zhang-eq} is a special case of Theorem \ref{thm1}.

When $n\ge 4$, we observe that the expression of $C_{n}(q)$ in terms of $A_0,A_1,\dots, A_{n-1}$ is not unique. The polynomial given by the recursive relation in Theorem \ref{thm1} is just one candidate. For example, we have
\begin{align}
C_{4}&=\frac{1}{10}A_1-\frac{14}{15}A_2-\frac{1}{6}A_3+\frac{23}{5}A_{0}A_{1} \nonumber \\
&=\frac{1}{10}A_1-\frac{11}{10}A_2+\frac{23}{5}A_0A_1-6A_{1}^2+4A_{0}A_{2}.
\end{align}
Thus we continue to study the relations between $A_{i}$'s. Indeed, the following identity is established
\[A_3=A_2+36A_1^2-24A_0A_2.\]
Differentiating it gives more similar identities on $A_i$'s. Therefore, we find that it is possible to express $C_{n}$ as a polynomial in just $A_0, A_1$ and $A_2$. Furthermore, the coefficients of the linear terms in that polynomial can be given explicitly using Bernoulli numbers. Let $B_{n}$ be the $n$-th Bernoulli number. It is well known that $B_{2m+1}=0$ for all $m\ge 1$. The first few values of $B_{i}$ are $B_{0}=1$, $B_{1}=-\frac{1}{2}$, $B_{2}=\frac{1}{6}$ and $B_{4}=-\frac{1}{30}$.
\begin{theorem}\label{thm2}
For any $n\ge 1$, $C_{n}$ can be expressed as a polynomial in $A_0, A_1$ and $A_2$ with rational coefficients. This polynomial is unique and for $n\ge 2$, we have
\begin{align*}
C_{2n-1}&=\frac{6B_{2n}}{n}A_0-\frac{36B_{2n}}{n}A_1+\left(1+\frac{30B_{2n}}{n} \right)A_2+{\rm higher \ degree \ terms}, \\
C_{2n}&=-\frac{6B_{2n}}{n}A_1+\left(\frac{6B_{2n}}{n}-1 \right)A_2+ {\rm higher\ degree\ terms}.
\end{align*}
\end{theorem}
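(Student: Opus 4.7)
The plan is to split the proof into three parts: existence of the representation in $\mathbb{Q}[A_0,A_1,A_2]$, uniqueness of that representation, and the explicit Bernoulli-number formulas for the linear terms.

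For existence, I will exploit the Ramanujan operator $\theta:=q\tfrac{d}{dq}$, which satisfies $\theta A_i=A_{i+1}$. Starting from the Ramanujan differential relations
\begin{align*}
\theta E_2=\tfrac{1}{12}(E_2^2-E_4),\quad \theta E_4=\tfrac{1}{3}(E_2E_4-E_6),\quad \theta E_6=\tfrac{1}{2}(E_2E_6-E_4^2),
\end{align*}
together with $E_2=1-24A_0$, a short computation yields the key identity $A_3=A_2+36A_1^2-24A_0A_2$, which the authors have already announced in the excerpt. Since $\theta$ acts as a derivation and $\theta A_2=A_3\in\mathbb{Q}[A_0,A_1,A_2]$, the subring $\mathbb{Q}[A_0,A_1,A_2]$ is stable under $\theta$. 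An induction on $n$ then shows $A_n\in\mathbb{Q}[A_0,A_1,A_2]$ for every $n\ge 0$. Combined with Theorem~\ref{thm1}, which places $C_n$ in $\mathbb{Q}[A_0,\dots,A_{n-1}]$, this gives $C_n\in\mathbb{Q}[A_0,A_1,A_2]$.

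For uniqueness, I will show $\mathbb{Q}[A_0,A_1,A_2]=\mathbb{Q}[E_2,E_4,E_6]$ by explicitly inverting the relations $E_2=1-24A_0$, $E_4=E_2^2+288A_1$, and the expression for $A_2$ in terms of $E_2,E_4,E_6$ obtained from $\theta E_4$. Because $E_2,E_4,E_6$ are algebraically independent over $\mathbb{Q}$ (a standard fact from the theory of quasi-modular forms), so are $A_0,A_1,A_2$, which yields uniqueness of the polynomial representation.

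For the Bernoulli-number coefficients, my strategy is to first track the linear part of $C_n$ in its \emph{original} representation from Theorem~\ref{thm1}. Write $C_n=\sum_{j\ge0}\alpha_{n,j}A_j+(\text{higher-degree terms})$. Since the reduction $A_j\mapsto A_2+(\text{higher degree})$ holds for all $j\ge 3$ (an immediate induction using $\theta$ applied to $A_3=A_2+36A_1^2-24A_0A_2$), the linear part in $\mathbb{Q}[A_0,A_1,A_2]$ is
\begin{align*}
\alpha_{n,0}A_0+\alpha_{n,1}A_1+\Bigl(\sum_{j\ge 2}\alpha_{n,j}\Bigr)A_2.
\end{align*}
Thus Theorem~\ref{thm2} is equivalent to three identities on the coefficients $\alpha_{n,j}$. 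These will be obtained from the recursion for $C_n$ produced in Section~\ref{proof}: setting all quadratic and higher contributions to zero, the recursion collapses to a linear recurrence on the scalar sequences $\alpha_{n,j}$. I expect to package these into a generating function of the shape $\sum_j \alpha_{n,j}t^j$ or $\sum_n \alpha_{n,j}s^n$, and to recognize it via the defining expansion $t/(e^t-1)=\sum_n B_n t^n/n!$ (or the closely related expansion of $\coth$), which is the natural Bernoulli generating function likely to appear when one inverts the transcendental equation $f(x_k)=0$ asymptotically, where the Stirling-type expansion of $\log n!$ brings $B_{2n}/n$ into play.

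The main obstacle will be the third step: the recursion in Theorem~\ref{thm1} governs the full nonlinear structure of $C_n$, so isolating the linear part cleanly and recognizing its generating function as $t/(e^t-1)$ (or an allied Bernoulli-number generating series) will require careful bookkeeping. A secondary, but easier, difficulty is verifying that the coefficient of $A_0$ in $C_{2n}$ vanishes for all $n\ge 2$; I expect this to follow from a parity/symmetry argument in the recursion, reflecting the fact that odd-index Bernoulli numbers $B_{2n+1}$ vanish. Once the generating function is identified, reading off the coefficients $6B_{2n}/n$, $-36B_{2n}/n$, $1+30B_{2n}/n$ for $C_{2n-1}$ and the corresponding ones for $C_{2n}$ becomes a direct comparison of series.
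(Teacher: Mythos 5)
Your first two steps coincide with the paper's own argument: existence comes from the identity $A_3=A_2+36A_1^2-24A_0A_2$ plus $\Theta$-stability of $\mathbb{Q}[A_0,A_1,A_2]$ (Lemma \ref{An-poly-lem}) combined with $C_n\in\mathbb{Q}[A_0,\dots,A_{n-1}]$, and uniqueness from the algebraic independence of $E_2,E_4,E_6$ transported to $A_0,A_1,A_2$ via Corollary \ref{E-A-repn}. Your observation that the reduced linear part is $\alpha_{n,0}A_0+\alpha_{n,1}A_1+\bigl(\sum_{j\ge2}\alpha_{n,j}\bigr)A_2$, because $A_j=A_2+\text{higher degree terms}$ for $j\ge3$ and the reduction creates no new linear terms, is also exactly the paper's reduction step.

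The genuine gap is the third step, which is where all the content of the Bernoulli formulas lives, and your proposal only offers a hope rather than an argument. You assert that "setting all quadratic and higher contributions to zero, the recursion collapses to a linear recurrence on the scalar sequences $\alpha_{n,j}$" and that a generating function of type $t/(e^t-1)$ will then be recognized; neither claim is substantiated, and the paper's actual route is quite different: the linear part of $C_n$ is \emph{not} extracted from a recurrence in $n$ at all. By \eqref{cn}, \eqref{sirep} and $P_i=A_{i-1}+\text{higher degree terms}$, one has $\alpha_{n,j}=3\cdot2^{j}S_{j,0}(n)$, where $S_{j,0}(n)$ is the coefficient of $uv^{j}$ in the single closed-form polynomial $\Delta(n+1,0)=Q_{n+1}(j)-(-1)^{n+1}\sigma_{n+1}(j)$ built from Stirling numbers of the first kind. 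The three quantities you need are then obtained by: (i) evaluating at $j=2$ to get $\sum_j\alpha_{n,j}=(-1)^{n-1}$ (Proposition \ref{sumcoeff}), which supplies the $A_2$-coefficient; (ii) computing $[j]\sigma_{2m}=-B_{2m}/(2m)$ from Faulhaber's formula and Newton's identities to get $S_{1,0}(2n-1)=B_{2n}/n$ and $S_{2,0}(2n)=-B_{2n}/(2n)$ (Proposition \ref{S10-coeff}); and (iii) proving the convolution identity $2[j^2]\sigma_{2n}=\sum_{m=1}^{n-1}[j]\sigma_{2m}\,[j]\sigma_{2n-2m}$ to get $S_{2,0}(2n-1)=-3B_{2n}/n$ (Proposition \ref{S20-coeff}). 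Likewise, the vanishing $\alpha_{2n,0}=0$, which you hope follows from "a parity/symmetry argument," in fact requires showing that $\Delta(2n+1,0)$ is divisible by $v^{2}$, via the antisymmetry $\sum_{m}(-1)^m\sigma_m\sigma_{2n+1-m}=0$ and the divisibility of odd power sums by $v^2$. None of these computations is routine bookkeeping that can be waved through a generating-function ansatz, and your appeal to the Stirling expansion of $\log n!$ does not engage with the actual source of the Bernoulli numbers here (coefficients of $j$ and $j^2$ in the Stirling polynomials $\sigma_m(j)$, $Q_m(j)$). As it stands, the explicit formulas for the linear coefficients — the heart of Theorem \ref{thm2} — remain unproved in your proposal.
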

For example, we find that
\begin{align*}
C_{5}=&\frac{1}{21}A_0-\frac{2}{7}A_1+\frac{26}{21}A_2+\frac{53}{70}A_{0}^2+22A_{1}^2-36A_0A_{1}^2\nonumber \\
&-\frac{159}{35}A_0A_1-\frac{43}{2}A_0A_2+2A_1A_2+\frac{737}{210}A_{0}^3+24A_{0}^2A_{2}, \\
C_{6}=&-\frac{1}{21}A_1-\frac{20}{21}A_2-\frac{74}{35}A_0A_1-\frac{1401}{35}A_{1}^2-\frac{2}{5}A_{2}^2+\frac{705}{14}A_{0}A_{2}\\
&-\frac{101}{10}A_1A_2+\frac{1662}{5}A_{0}A_{1}^2-\frac{321}{14}A_{0}^2A_{1}-\frac{36}{5}A_{1}^3\\
&-\frac{1132}{5}A_{0}^2A_{2}-\frac{864}{5}A_{0}^2A_{1}^2+\frac{72}{5}A_0A_1A_2+\frac{576}{5}A_{0}^3A_{2}.
\end{align*}
Let
\begin{align}
E_2&=E_2(q):=1-24\sum_{n=1}^{\infty}\frac{nq^n}{1-q^n}, \label{P-defn}\\
E_4&=E_4(q):=1+240\sum_{n=1}^{\infty}\frac{n^3q^n}{1-q^n}, \label{Q-defn}\\
E_6&=E_6(q):=1-504\sum_{n=1}^{\infty}\frac{n^5q^n}{1-q^n}. \label{R-defn}
\end{align}
It is well known that $E_2, E_4$ and $E_6$ are classical Eisenstein series on the full modular group
\begin{align*}
SL(2,\mathbb{Z})=\left\{\begin{pmatrix} a & b \\ c& d\end{pmatrix}\mid ad-bc=1,\ a,b,c,d \in \mathbb{Z}  \right\}.
\end{align*}
We will show that $A_0$, $A_1$ and $A_2$ can be represented as polynomials in $E_2, E_4$ and $E_6$ with rational coefficients and vice versa. This in turn implies that $C_{n}(q)\in \mathbb{Q}[E_2,E_4,E_6]$, a subset of the ring of quasimodular forms on $SL(2,\mathbb{Z})$ \cite{Martin}. Since it is well known that $E_2$, $E_4$ and $E_6$ are algebraically independent over $\mathbb{C}$, it follows that $A_0$, $A_1$ and $A_2$ are also algebraically independent over $\mathbb{C}$.

The paper is organized as follows. In Section \ref{pre} we collect some results that is necessary for proving Theorem \ref{thm1}. Specifically,  we analyze the values of $f(-(k+ak^{-1})q^{1-k})$. The crux of the analysis is the series expansion of
\begin{align}\label{key-series}
\frac{G(x)-H(x)}{x^2}(1+a(x)x^2)
\end{align}
where
\begin{align}
G(x)&:=\prod_{i=0}^{j-1}\frac{1+ax^2}{1+ix}, \label{G-defn} \\
H(x)&:=\prod_{i=1-j}^{-1}\frac{1+ix}{1+ax^2} \label{H-defn}
\end{align}
and
\begin{align*}
a(x)=\sum_{n=0}^{\infty}a_{n}x^n.
\end{align*}
In Section \ref{proof}, we first define $C_{n}$ recursively by exploiting the coefficients of the series expansion of \eqref{key-series}. Then we determine the signs of $f(x)$ at the endpoints of certain intervals. We finish the proof of Theorem \ref{thm1} by the Intermediate Value Theorem. Section \ref{representation} is devoted to the representations of $C_{n}$. We give more details on the recursive formula of $C_n$, and discuss the structure of the multivariate polynomial representations of $C_n$ by those $A_i$'s, especially the linear terms. Then we establish a relation between the classical Eisenstein series and our $A_i$'s, and complete the proof of Theorem \ref{thm2}.
\begin{remark}\label{remark1}
Suppose that $C_i(q)=\sum_{j=1}^\infty C_{ij}q^j$. Using formal power series, we denote $F_j(x):=\sum_{i=1}^\infty C_{ij}x^{i+1}$. Then one may rewrite the asymptotic expansion as a formal power series
\begin{align}\label{add-xk-expan}
x_k(q)\sim -kq^{1-k}\Big(1+\sum_{j=1}^\infty F_j(k^{-1})q^j\Big).
\end{align}
We observe that the formal power series \eqref{add-xk-expan} numerically agrees with the expansion in $q$ of the $k$-th zero given in \cite[p.14]{sokal2009}. Note that in \cite{sokal2009} the sequence $(x_k)$ starts with subscript 0 and for each $j$, $F_j(k^{-1})$ is a rational function in $k$. It was conjectured by Sokal \cite[p.11]{sokal2009} that $F_j(k^{-1})\ge0$ for all integers $j\ge1$, $k\ge1$. In particular, we will see that $C_{i1}=(-1)^{i+1}$ by Proposition \ref{sumcoeff}, which implies that \[F_1(k^{-1})=\frac{1}{k(k+1)}.\]However, the difficulty to determine the closed form of $F_j(k^{-1})$ for $j\ge2$ lies in the complexity of those nonlinear terms in the polynomials $C_i=C_i(A_0,...,A_{i-1})$, $i\ge1$.
\end{remark}

\section{Preliminary Results}\label{pre}
We study the values of $f(-(k+ak^{-1})q^{1-k})$ for large $k$, where $a=a(k^{-1})$ is a function in $k$ which is positive and bounded.
We first observe that
\begin{align}\label{f-exp}
f( - (k + a {k^{ - 1}}){q^{1-k}})=  \sum\limits_{n = 0}^\infty  {{( - 1)}^n{u_n}},
\end{align}
where
\[u_n={\frac{{{{(k + a {k^{ - 1}})}^n}}}
{{n!}}} {q^{-n(2k - n- 1)/2}}.\]
We denote
\[{v_j} = {u_{2k - j - 1}} - {u_j}\quad (0 \le j \le k - 1).\]
In the following lemma, we use the notation ``$O(k^{-m})$'' to denote that the remainder is bounded by $Ck^{-m}$, where $C$ is an absolute constant independent of $N$ and $k$. This lemma shows the positivity and ``almost monotonicity'' of the sequence $v_j$.
\begin{lemma}\label{lemmavj}
For any integer $k \ge 1$, we have
\[\quad {v_j} > 0, \quad \ 0 \le j \le k - 1.\]
Furthermore, suppose that $a=a_{0}+O(k^{-1})$ for some constant $a_{0}>0$, then there exists a positive integer $N(q)$ such that for any $N\ge N(q)$ and $k\ge q^{-3N}$,
\[{v_j} < {v_{j + 1}}, \quad \ 0 \le j \le k - N.\]
\end{lemma}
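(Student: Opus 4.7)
The plan is to exploit the closed-form ratio $r_j := u_{2k-j-1}/u_j$, which collapses nicely due to a symmetry in the exponents: both $u_{2k-j-1}$ and $u_j$ carry the same power $q^{-j(2k-j-1)/2}$, so with $\alpha := k + a k^{-1}$ one obtains
\[
r_j = \frac{\alpha^{2k-2j-1}}{(j+1)(j+2)\cdots(2k-j-1)},
\]
which is independent of $q$. The denominator is a product of $2k-2j-1$ integers with arithmetic mean $k$, so by AM--GM it is at most $k^{2k-2j-1}$; since $a > 0$ gives $\alpha > k$, we obtain $r_j > 1$ and hence $v_j = u_j(r_j - 1) > 0$ throughout $0 \le j \le k-1$. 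This settles the first assertion.

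For the monotonicity I would introduce the neighbouring ratios
\[
s_j := \frac{u_{j+1}}{u_j} = \frac{\alpha\, q^{j+1-k}}{j+1}, \qquad t_j := \frac{u_{2k-j-2}}{u_{2k-j-1}} = \frac{(2k-j-1)\,q^{j+1-k}}{\alpha},
\]
and derive from the rearrangement $v_{j+1} - v_j = (u_{2k-j-2} - u_{2k-j-1}) - (u_{j+1} - u_j)$ the key identity
\[
\frac{v_{j+1} - v_j}{u_j} = (r_j - 1)(t_j - 1) - (s_j - t_j).
\]
The algebraic identity $(j+1)(2k-j-1) = k^2 - (k-j-1)^2 < \alpha^2$ forces $s_j > t_j > 0$, so the task reduces to showing that the positive product $(r_j - 1)(t_j - 1)$ dominates the positive difference $s_j - t_j$.

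Setting $\ell := k - j - 1$ (so $N-1 \le \ell \le k-1$ over the range $0 \le j \le k - N$), direct computation gives
\[
s_j - t_j = \frac{q^{-\ell}(\ell^2 + 2a + a^2/k^2)}{(k-\ell)\alpha},
\]
while factoring $r_j = (\alpha/k)^{2\ell+1}\prod_{i=1}^{\ell}(1 - i^2/k^2)^{-1}$ and applying $\prod(1 + y_i) \ge 1 + \sum y_i$ with $y_i = i^2/(k^2 - i^2)$ yields the clean unconditional lower bound $r_j - 1 \ge \ell^3/(3k^2)$. For $\ell \ge N-1$ with $N = N(q)$ large enough that $q^{-\ell}$ dominates the constant $1$, one has $t_j - 1 \sim (k+\ell)q^{-\ell}/\alpha$, so $(r_j - 1)(t_j - 1) \gtrsim \ell^3 q^{-\ell}/k^2$, whereas $s_j - t_j \lesssim \ell^2 q^{-\ell}/k^2$ in the regime $\ell \le k/2$; the ratio is therefore of order $\ell$, exceeding $1$ once $N$ is sufficiently large. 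For $\ell > k/2$ the sharper bound $\log r_j \ge \ell(\ell+1)(2\ell+1)/(6k^2)$ makes $r_j$ grow exponentially in $k$, crushing any polynomial competition from $s_j - t_j$.

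The main obstacle, and the subtle point, is to choose a single threshold $N(q)$ that works uniformly over $0 \le j \le k - N$; the hypothesis $k \ge q^{-3N}$ is calibrated so that the amplification $q^{-\ell} \ge q^{1-N}$ is preserved while all sub-leading corrections (of sizes like $\ell/k^2$, $\ell^5/k^4$, etc., coming from the finer expansion of $r_j$) remain uniformly negligible.
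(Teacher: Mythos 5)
Your proposal is correct in substance and, for the monotonicity part, takes a genuinely different route from the paper. The positivity claim is proved exactly as in the paper (the same AM--GM estimate on the product $(j+1)\cdots(2k-j-1)$ with mean $k$, plus $\alpha>k$). For $v_j<v_{j+1}$, the paper works with the reciprocal ratio $w_j=1/r_{j+1}$, reduces the claim to an inequality in $w_j$, replaces $w_j$ by the extremal value $w_{k-N}$ using monotonicity of $w_j$, and then runs a calculus argument on an auxiliary function $g(t)$ (showing $g''$, $g'$, $g$ are negative at $t=k+N-1$ and hence $g<0$ for all $t\ge k+N-1$) via asymptotic expansions of $w_{k-N}$ with errors like $O(k^{-19/6})$, the hypothesis $k\ge q^{-3N}$ being what tames those errors. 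You instead use the exact identity $(v_{j+1}-v_j)/u_j=(r_j-1)(t_j-1)-(s_j-t_j)$, which I checked and which is correct, as are your closed forms for $s_j$, $t_j$, the formula $s_j-t_j=q^{-\ell}(\ell^2+2a+a^2/k^2)/((k-\ell)\alpha)$, and the clean bound $r_j-1\ge \ell^3/(3k^2)$; you then compare pointwise in $j$ in two regimes: for $N-1\le\ell\le k/2$ the ratio $(r_j-1)(t_j-1)/(s_j-t_j)$ is bounded below by a constant multiple of $\ell\ge N-1$, and for $\ell>k/2$ the bound $\log r_j\ge \ell(\ell+1)(2\ell+1)/(6k^2)\ge k/24$ makes $r_j$ exponentially large in $k$ while $s_j-t_j\le Ckq^{-\ell}$ and $t_j-1\ge q^{-\ell}/4$. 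This is more elementary, avoids the delicate expansion of $w_{k-N}$ and the auxiliary function entirely, and in fact uses $k\ge q^{-3N}$ only to force $k$ to be large. Two minor remarks: your closing paragraph presents uniformity of the threshold $N(q)$ as a remaining obstacle, but your own two-regime estimates already deliver it (growth like $\ell$ in the bulk, exponential growth in $k$ near $j=0$), so only routine constant-chasing is left; and, exactly as in the paper, the threshold implicitly depends on $a_0$ and the implied constant in $a=a_0+O(k^{-1})$ (through bounds such as $\alpha\le 2k$ and $\ell^2+2a+a^2/k^2\le C\ell^2$), which is consistent with how the lemma is applied.
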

\begin{proof}
Note that by the AM-GM inequality, we have
\begin{align}\label{AM-start}
\prod\limits_{i = 1}^{2k - 1 - 2j} {(j + i)} & <\left(\frac{\sum\limits_{i=1}^{2k-1-2j}(j+i)}{2k-1-2j} \right)^{2k-1-2j}\nonumber \\
&= {k^{2k - 1 - 2j}} \nonumber \\
&< {(k + a{k^{ - 1}})^{2k - 1 - 2j}}\quad (0 \le j \le k - 1).
\end{align}
This implies
\[\frac{{{{(k + a{k^{ - 1}})}^j}}}
{{j!}} < \frac{{{{(k + a{k^{ - 1}})}^{2k - 1 - j}}}}
{{(2k - 1 - j)!}}\quad (0 \le j \le k - 1).\]
So
\[{\rm{ }}{u_j} < {u_{2k - 1 - j}}\quad (0 \le j \le k - 1),\]
which gives the first inequality.

Note that
\begin{align}\label{vj-exp}
v_{j+1}=q^{(j+1)(j+2-2k)/2}\frac{(k+ak^{-1})^{2k-j-2}}{(2k-2-j)!}(1-w_{j})
\end{align}
where
 \[{w_j} = \frac{{(2k - 2 - j)!}}
{{(j + 1)!{{(k + a{k^{ - 1}})}^{2k - 3 - 2j}}}}.\]
Since we have proved that $v_{j+1}>0$, it follows that
\[0<w_j<1\]
By the AM-GM inequality,
\[\frac{{{w_{j + 1}}}}
{{{w_j}}} = \frac{{{{(k + a {k^{ - 1}})}^2}}}
{{(j + 2)(2k - 2 - j)}} > 1.\]

By \eqref{vj-exp}, we see that $v_{j}<v_{j+1}$ is equivalent to
\begin{align}\label{v-incre-1}
q^{j+1-k}(1-w_{j})>\frac{k+ak^{-1}}{2k-j-1}(1-w_{j-1}).
\end{align}
Using the relation
\begin{align*}
w_{j-1}=\frac{(j+1)(2k-j-1)}{(k+ak^{-1})^2},
\end{align*}
we see that \eqref{v-incre-1} is equivalent to
\begin{align}\label{v-incre-2}
\left( {{q ^{-k + j + 1}} - \frac{{j + 1}}
{{k + a {k^{ - 1}}}}} \right){w_j} < {q^{-k + j + 1}} - \frac{{k + a {k^{ - 1}}}}
{{2k - 1 - j}}.
\end{align}

For some positive integer $N$, we denote
\[t = 2k - 1 - j\quad (k + N-1 \le t \le 2k - 1)\]
and
\[g(t) = \frac{1}
{t} - \frac{{(2k - t){w_{k - N}}}}
{{{{(k + a {k^{ - 1}})}^2}}} - {q ^{k - t}}\left( {\frac{{1 - {w_{k - N}}}}
{{k + a {k^{ - 1}}}}} \right).\]
Direct calculation yields
\begin{align}\label{g-1st}
g'(t) =  - \frac{1}
{{{t^2}}} + \frac{{{w_{k - N}}}}
{{{{(k + a {k^{ - 1}})}^2}}} + \left( {\frac{{1 - {w_{k - N}}}}
{{k + a {k^{ - 1}}}}} \right){q ^{k -t}}\ln q
\end{align}
and
\begin{align}\label{g-2nd}
g''(t) = \frac{2}
{{{t^3}}} - \left( {\frac{{1 - {w_{k - N}}}}
{{k + a {k^{ - 1}}}}} \right){q ^{k - t}}{(\ln q )^2}.
\end{align}
Since $0<w_j<1$, $g''(t)$ is decreasing for $t>0$.

Note that when $N$ is large enough ($N\ge N_{1}(q)$), we have  $k\ge q^{-3N}\ge  N^{2}$. Hence
\begin{align}\label{estimate-1}
\frac{1}{(k+N-1)^3}=k^{-3}\left(1-3(N-1){k^{-1}}+O\left(N^2k^{-2}\right) \right)=k^{-3}+O(k^{-7/2}).
\end{align}
Next,
\begin{align*}
w_{k-N}&=\prod\limits_{t=-N+2}^{N-2}\frac{k+t}{k+ak^{-1}} \nonumber \\
&=\prod\limits_{t=-N+2}^{N-2}\left(1+\frac{t}{k}\right) \left(1-\frac{a_0}{k^2}+O(k^{-3}) \right) \nonumber \\
&=\prod\limits_{t=-N+2}^{N-2}\left(1+\frac{t}{k}-\frac{a_0}{k^2}-\frac{a_0t}{k^3}+O(k^{-3}) \right) \nonumber\\
&=1-a_0(2N-3)k^{-2}+\sum\limits_{-N+2\le t_1< t_2\le N-2}\frac{t_1t_2}{k^2}+O(k^{-5/2}) \nonumber \\
&=1-a_0(2N-3)k^{-2}-\frac{(N-2)(N-1)(2N-3)}{6}k^{-2}+O(k^{-5/2}).
\end{align*}
Hence
\begin{align}\label{estimate-2}
\frac{1-w_{k-N}}{k+ak^{-1}}=&\frac{1}{k}\left(1-a_0k^{-2}+O(k^{-3}) \right) \nonumber \\
&\cdot \left(a_0(2N-3)k^{-2}+\frac{(N-2)(N-1)(2N-3)}{6}k^{-2}+O(k^{-5/2})  \right) \nonumber \\
=&\left(a_0(2N-3)+\frac{(N-2)(N-1)(2N-3)}{6} \right)k^{-3}+O(k^{-7/2}).
\end{align}
Note that $k\ge q^{-3N}$ implies $q^{-N}=O(k^{1/3})$. Now by \eqref{estimate-1} and \eqref{estimate-2}, we deduce that
\begin{align*}
g''(k+N-1)=\left( {2 - c_{N}{q ^{1 - N}}{{(\ln q )}^2}} \right){k^{ - 3}} + O({k^{ - 19/6}})
\end{align*}
where
\[c_{N} := a_0(2N - 3) + \frac{1}{6}(N - 2)(N - 1)(2N - 3).\]
Similarly, we have
\begin{align*}
g'(k + N - 1) = \left( {2N - 2 + c_{N}{q^{1 - N}}\ln q } \right){k^{ - 3}} + O({k^{ - 19/6}}).
\end{align*}
When $N$ is sufficiently large ($N\ge N_2(q)\ge N_1(q)$), we will we have
\[g''(k+N-1) <0,\]
and
\[g'(k+N-1) < 0.\]
So $g'(t)$ and $g(t)$ are also decreasing for $t\ge k+N-1$.

In the same way, we find that
\begin{align*}
g(k + N - 1) =& \left( {a_0 (2N - 1) + \frac{N}
{6}(N - 1)(2N - 1) - c_{N}{q ^{1 - N}}} \right){k^{ - 3}} \nonumber \\
&+ O({k^{ - 19/6}}).
\end{align*}
When $N$ is large enough ($N\ge N(q)\ge N_2(q)$), we have
\[g(k+N-1)<0.\]
So if $N\ge N(q)$ and $k\ge q^{-3N}$, we have
\[g(t) \le g(k + N-1) < 0, \quad t\ge k+N-1.\]
Therefore,
 \[\frac{1}
{{2k - 1 - j}} - \frac{{(j + 1){w_{k - N}}}}
{{{{(k + a {k^{ - 1}})}^2}}} - {q ^{-k + j + 1}}\left( {\frac{{1 - {w_{k - N}}}}
{{k + a {k^{ - 1}}}}} \right) < 0\quad (0 \le j \le k - N).\]
This implies
\[\left( {{q ^{-k + j + 1}} - \frac{{j + 1}}
{{k + a {k^{ - 1}}}}} \right){w_{k - N}} < {q ^{-k + j + 1}} - \frac{{k + a{k^{ - 1}}}}
{{2k - 1 - j}}\quad (0 \le j \le k - N).\]
Since $w_j<w_{j+1}$, we have
\[\left( {{q ^{-k + j + 1}} - \frac{{j + 1}}
{{k + a {k^{ - 1}}}}} \right){w_j} < {q^{-k + j + 1}} - \frac{{k + a {k^{ - 1}}}}
{{2k - 1 - j}}\quad (0 \le j \le k - N).\]
This proves \eqref{v-incre-2} and hence the fact that
\[{v_j} < {v_{j + 1}}\quad (0 \le j \le k - N).\]
\end{proof}

However, the sequence $v_j$ may not be monotone when $k-N<j\le k-1$. So we need more delicate analysis on these $v_j$'s, which is the crux of the problem. For $1\le j\le N$,
\begin{align}
v_{k-j}&=u_{k+j-1}-u_{k-j}=\Big(\frac{(k+ak^{-1})^{k+j-1}}{(k+j-1)!}-\frac{(k+ak^{-1})^{k-j}}{(k-j)!}\Big)q^{-(k+j-1)(k-j)/2} \nonumber\\
&=\left(\prod_{i=1}^{j-1}\frac{k+ak^{-1}}{k+i}-\prod_{i=1-j}^0\frac{k+i}{k+ak^{-1}}\right)(k+ak^{-1})^k\frac1{k!}q^{-(k+j-1)(k-j)/2} \nonumber \\
&=\left(\prod_{i=1}^{j-1}\frac{1+ax^2}{1+ix}-\prod_{i=1-j}^0\frac{1+ix}{1+ax^2}\right)\frac{(1+ax^2)^2}{x^2}q^{j(j-1)/2}\cdot(k+ak^{-1})^{k-2}\frac1{k!}q^{-k(k-1)/2} \nonumber\\
&=(G(x)-H(x))\frac{1+ax^2}{x^2}q^{j(j-1)/2}\cdot(k+ak^{-1})^{k-2}\frac1{k!}q^{-k(k-1)/2}, \label{vkj}
\end{align}
where $G(x)$ and $H(x)$ was defined in \eqref{G-defn}--\eqref{H-defn} and here we set $x=k^{-1}$. In particular, $G(x)=1+ax^2$ and $H(x)=1$ when $j=1$. Now we describe the series expansion of \eqref{key-series}.
\begin{lemma}\label{coeff}
Let
\begin{align}
a=a(x):=\sum_{i=0}^{\infty}a_{i}x^{i}.
\end{align}
For $n\ge 1$, the coefficient of $x^{n-1}$ in the expansion of
\[\frac{G(x)-H(x)}{x^2}(1+a(x)x^2)\]
has the form
\begin{equation}\label{goal}
u(a_{n-1}+S_{0}(n)+S_{1}(n)v+S_{2}(n)v^2+...+S_{n}(n)v^n),
\end{equation}
where $u=2j-1$, $v=j(j-1)$, and each $S_{i}(n)$ is a polynomial of $a_0,a_1,...,a_{n-2}$, which is independent of $j$ and has rational coefficients. In particular, $S_{0}(1)=0$ and $S_{1}(1)=\frac{1}{6}$.
\end{lemma}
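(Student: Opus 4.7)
The plan is to exploit the involution $j\mapsto 1-j$, under which $u=2j-1$ flips sign while $v=j(j-1)$ is fixed. The key structural claim is that this involution interchanges $G$ and $H$, so that each coefficient $[x^n](G-H)$, viewed as a polynomial in $j$ with coefficients in $\mathbb{Q}[a_0,a_1,\dots]$, is antisymmetric under $j\mapsto 1-j$. Because $u^2=4v+1$, the ring $\mathbb{Q}[j]$ decomposes as $\mathbb{Q}[v]\oplus u\,\mathbb{Q}[v]$, and the antisymmetric elements are precisely the $\mathbb{Q}[v]$-multiples of $u$. Thus $[x^n](G-H)=u\cdot\psi_n(v)$ for some polynomial $\psi_n\in\mathbb{Q}[v][a_0,a_1,\dots]$, which drives the structural form \eqref{goal}.

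To implement the symmetry, I would pass to $\log G=j\log(1+ax^2)+\sum_{m\ge 1}\tfrac{(-1)^m x^m}{m}S_m(j-1)$, where $S_m(j-1)=\sum_{i=1}^{j-1}i^m$ is by Faulhaber a polynomial in $j$ of degree $m+1$. Exponentiating shows every $[x^n]G$ is a polynomial in $j$ of degree at most $2n$ (with coefficients in $\mathbb{Q}[a_0,a_1,\dots]$), and likewise for $H$. Adopting the reciprocal convention $\prod_{i=0}^{-k}f_i:=\prod_{i=1-k}^{-1}f_i^{-1}$, one verifies directly from the defining products that $G_{1-j}(x)=H_j(x)$ at every integer $j\ge 1$, which then lifts to the polynomial identity $P_n(1-j)=Q_n(j)$ (where $P_n=[x^n]G$, $Q_n=[x^n]H$). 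Cancellation of the shared $j^{2n}$ leading terms reduces the $j$-degree of $[x^n](G-H)$ to at most $2n-1$, yielding $\deg_v\psi_n\le n-1$; in particular $[x^{n+1}](G-H)=u\,\psi_{n+1}(v)$ with $\deg_v\psi_{n+1}\le n$.

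Writing $[x^{n-1}]\bigl((G-H)(1+a(x)x^2)/x^2\bigr)=[x^{n+1}](G-H)+\sum_{k=0}^{n-3}a_k[x^{n-1-k}](G-H)$ (the summation stops at $k=n-3$ because $G-H$ vanishes to order $x^2$), the expression equals $u\bigl(\psi_{n+1}(v)+\sum_{k=0}^{n-3}a_k\psi_{n-1-k}(v)\bigr)$, a $u$-multiple of a polynomial in $v$ of degree at most $n$. To isolate the coefficient of $a_{n-1}$: it enters $[x^{n+1}]G$ only through the linear term $jax^2$ of $(1+ax^2)^j$ (contributing $ja_{n-1}$) and $[x^{n+1}]H$ only through $-(j-1)ax^2$ of $(1+ax^2)^{-(j-1)}$ (contributing $-(j-1)a_{n-1}$); their difference is $(2j-1)a_{n-1}=u\cdot a_{n-1}$. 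All remaining terms $a_k\psi_{n-1-k}$ with $k\le n-3$ only involve $a_0,\dots,a_{n-3}$, so the auxiliary polynomials $S_i(n)$ lie in $\mathbb{Q}[a_0,\dots,a_{n-2}]$. The base case $n=1$ follows from a short direct expansion using $S_1(j-1)=v/2$ and $S_2(j-1)=uv/6$, yielding $[x^2](G-H)=u(a_0+v/6)$ and hence $S_0(1)=0$, $S_1(1)=1/6$. The main technical obstacle is rigorously justifying $G_{1-j}(x)=H_j(x)$ at the polynomial level, i.e., verifying compatibility of the reciprocal convention with Faulhaber's polynomial formulas; once this symmetry is in place, the remainder is careful bookkeeping of $j$- and $v$-degrees.
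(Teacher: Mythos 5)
Your proposal is sound and rests on the same central mechanism as the paper's proof: the involution $j\mapsto 1-j$ fixes $v=j(j-1)$, negates $u=2j-1$, and swaps $G$ and $H$, so $[x^n](G-H)$ lands in the antisymmetric part $u\,\mathbb{Q}[v]$ of $\mathbb{Q}[j]=\mathbb{Q}[v]\oplus u\,\mathbb{Q}[v]$; combined with the cancellation of the $j^{2n}$ leading terms this gives the degree bound, and your isolation of the $a_{n-1}$ contribution as $ja_{n-1}-(-(j-1)a_{n-1})=u\,a_{n-1}$ matches the paper's $\Delta(2,1)a_{n-1}$ term exactly, as does your base case. Where you differ is in how the symmetry is certified and in the bookkeeping: the paper expands $G_N=\sum_m\binom{j}{m}Q_{N-2m}(j)a^m$ and $H_N=\sum_m(-1)^N\binom{1-j}{m}\sigma_{N-2m}(j)a^m$ with $a$ as a single indeterminate, and proves the key identity $Q_n(1-t)=(-1)^n\sigma_n(t)$ (its Lemma~\ref{replace}) by showing both sides satisfy the recurrence $\overline{Q}_n(t+1)=\overline{Q}_n(t)-t\overline{Q}_{n-1}(t)$ with the same initial data --- no ``reciprocal convention'' is ever invoked, so there is nothing to lift. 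The one step you correctly flag as incomplete, namely promoting $G_{1-j}=H_j$ from a formal convention to a polynomial identity, is a genuine obligation: the polynomials $P_n(j)=[x^n]G_j$ are defined by their values at positive integers, and their values at non-positive integers need not a priori agree with any convention. But it is closable along exactly the line you sketch: writing $\log G_j=j\log(1+ax^2)+\sum_m\frac{(-1)^mx^m}{m}p_m(j)$ with $p_m(j)=\sum_{i=1}^{j-1}i^m$, Faulhaber's formula together with the reflection $B_{m+1}(1-x)=(-1)^{m+1}B_{m+1}(x)$ and $B_{2k+1}=0$ gives $p_m(1-j)=(-1)^{m+1}p_m(j)$ as a polynomial identity, whence $\log G_{1-j}=\log H_j$ coefficientwise. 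With that supplied (or replaced by the paper's recurrence argument), your proof is complete; the remaining degree and subscript bookkeeping is correct as written.
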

The rest of this section will be devoted to giving a proof of Lemma \ref{coeff}.  First, we compute the coefficients in the expansions $G(x)$ and $H(x)$
\begin{align*}
G(x)=\sum_{N=0}^{\infty}G_Nx^N, \quad H(x)=\sum_{N=0}^\infty H_Nx^N,
\end{align*}
where  $G_N$ and $H_N$ are polynomials of $a$ with coefficients depending on $j$.
For example,
\begin{align*}
G_0=&H_0=1, \\
G_1=&H_1=-\tfrac12j(j-1),\\
G_2=&\tfrac1{24}(j-1)j(j+1)(3j-2)+ja,\\
H_2=&\tfrac1{24}(j-2)(j-1)j(3j-1)+(1-j)a,\\
G_3=&-\tfrac1{48}(j-1)^2j^2(j+1)(j+2)-\tfrac12(j-1)j^2a,\\
H_3=&-\tfrac1{48}(j-3)(j-2)(j-1)^2j^2+\tfrac12(j-1)^2ja.
\end{align*}
To represent $G_N$ and $H_N$, we define for $j\ge 1$,
\begin{align}\label{sigma-rec}
\sigma_0&=\sigma_0(j):=1,\nonumber \\
\sigma_i&=\sigma_{i}(j):=\sum_{1\le n_1<...<n_i\le j-1}n_1n_2\cdot\cdot\cdot n_i,\ \ 1\le i\le j-1,
\end{align}
and $\sigma_i(j)=0$ if $i\ge j$. Here we note that $\sigma_{i}(j)$ is just the unsigned Stirling numbers of the first kind
\begin{align*}
\sigma_{i}(j)=c(j,j-i).
\end{align*}
Similarly, we define for $j\ge1$,
\begin{align}\label{Q-rec}
Q_0=Q_0(j):=1,\ \ Q_k=Q_k(j):=-\sum_{i=1}^{k}\sigma_iQ_{k-i},\ k\ge1.
\end{align}
Recall the generalized binomial coefficient for $k\in\mathbb{N}$, $\alpha\in \mathbb{C}$
\[{{\alpha}\choose{k}}=\frac{\alpha(\alpha-1)\cdots (\alpha-k+1)}{k!}.\]
\begin{lemma}\label{GNHN}
We have
\[G_N=\sum_{m=0}^{[N/2]}G(N,m)a^m,\ \ N=0,1,2,...\]
where
\[G(N,m)={{j}\choose{m}}Q_{N-2m}(j),\]
Moreover,
\[H_N=\sum_{m=0}^{[N/2]}H(N,m)a^m,\ \ N=0,1,2,...\]
where
\[H(N,m)=(-1)^N{{1-j}\choose{m}}\sigma_{N-2m}(j).\]
\end{lemma}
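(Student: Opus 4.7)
The plan is to expand $G(x)$ and $H(x)$ as products of two power series, one coming from the polynomial factor in $ax^2$ and one from the factor involving $\prod(1+ix)$, and then read off the coefficient of $x^N$ by convolution.

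First I would rewrite $G(x)$ by splitting off the $i=0$ factor (which equals $1+ax^2$) and writing
\[
G(x)=\frac{(1+ax^2)^{j}}{\prod_{i=1}^{j-1}(1+ix)}.
\]
The numerator is immediate from the binomial theorem: $(1+ax^2)^{j}=\sum_{m\ge 0}\binom{j}{m}a^{m}x^{2m}$. For the denominator I would observe that, since $\sigma_i(j)=e_i(1,2,\dots,j-1)$ is the elementary symmetric polynomial, one has
\[
\prod_{i=1}^{j-1}(1+ix)=\sum_{k\ge 0}\sigma_k(j)x^{k}.
\]
Inverting this power series and comparing coefficients of $x^k$ in $\bigl(\sum\sigma_k x^k\bigr)\bigl(\sum Q_k x^k\bigr)=1$ gives exactly the recursion \eqref{Q-rec} for $Q_k(j)$, so $\bigl(\prod_{i=1}^{j-1}(1+ix)\bigr)^{-1}=\sum_{k\ge 0}Q_k(j)x^{k}$. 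Multiplying the two series and collecting the coefficient of $x^{N}$ yields
\[
G_N=\sum_{m=0}^{[N/2]}\binom{j}{m}Q_{N-2m}(j)\,a^{m},
\]
which is the claimed formula for $G(N,m)$.

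For $H(x)$ I would proceed analogously. The product $\prod_{i=1-j}^{-1}(1+ix)$, after the substitution $i\mapsto -i$, becomes $\prod_{i=1}^{j-1}(1-ix)=\sum_{k\ge 0}(-1)^{k}\sigma_k(j)x^{k}$. The denominator is $(1+ax^2)^{j-1}$, and by the generalized binomial theorem
\[
(1+ax^2)^{-(j-1)}=\sum_{m\ge 0}\binom{1-j}{m}a^{m}x^{2m}.
\]
Convolution gives the coefficient of $x^{N}$ as
\[
H_N=\sum_{m=0}^{[N/2]}(-1)^{N-2m}\sigma_{N-2m}(j)\binom{1-j}{m}a^{m},
\]
and since $(-1)^{N-2m}=(-1)^{N}$, this is exactly $H(N,m)=(-1)^{N}\binom{1-j}{m}\sigma_{N-2m}(j)$.

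There is essentially no analytic obstacle here: the argument is purely formal manipulation of power series, and the only mildly delicate point is identifying the coefficients $\sigma_k(j)$ with the elementary symmetric polynomials in $1,2,\dots,j-1$ so that the recursion \eqref{Q-rec} comes out correctly as the inverse series. The sign bookkeeping in $H(x)$ (absorbing the $(-1)^{2m}$ into $(-1)^{N}$) is the only place where a minor check is needed.
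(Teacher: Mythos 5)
Your proof is correct and follows essentially the same route as the paper: expand the $(1+ax^2)$ power by the (generalized) binomial theorem, expand $\prod_{i=1}^{j-1}(1\pm ix)^{\mp 1}$ as $\sum Q_k x^k$ (via the recursion \eqref{Q-rec} as the inverse of $\sum \sigma_k x^k$) respectively $\sum (-1)^k\sigma_k x^k$, and read off the coefficient of $x^N$ by convolution, with the same sign bookkeeping $(-1)^{N-2m}=(-1)^N$. The paper's proof is just a terser version of exactly this computation.
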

\begin{proof} Since
\begin{align}\label{Q-gen}
\prod_{i=0}^{j-1}\frac1{1+ix}=\frac1{\sum_{i=0}^\infty{\sigma_ix^i}}=\sum_{i=0}^\infty Q_kx^k,
\end{align}
we have
\begin{align*}
 G(x)=\sum_{m=0}^j{{j}\choose{m}}a^mx^{2m}\sum_{k=0}^\infty Q_kx^k=\sum_{N=0}^\infty x^N\sum_{m=0}^{[N/2]}{{j}\choose{m}}Q_{N-2m}a^m.
\end{align*}
Similarly,
\begin{align*}
H(x)=\sum_{m=0}^\infty{{1-j}\choose{m}}a^mx^{2m}\sum_{k=0}^\infty \sigma_k(-x)^k=\sum_{N=0}^\infty x^N\sum_{m=0}^{[N/2]}(-1)^N\sigma_{N-2m}{{1-j}\choose{m}}a^m.
\end{align*}
\end{proof}

For fixed $n$, both $\sigma_n(j)$ and $Q_n(j)$ are polynomials in $j$. Hence they can be naturally extended to be two functions defined on the whole real line. The following lemma gives a relation between these two functions.
\begin{lemma} \label{replace}
Let $n\ge0$. Then we have $Q_n(1-t)=(-1)^n\sigma_n(t)$, $t\in \mathbb{R}$.
\end{lemma}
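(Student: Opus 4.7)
My plan is to prove $Q_n(1-t)=(-1)^n\sigma_n(t)$ by induction on $n$, by showing that the auxiliary polynomial $\tilde\sigma_n(j):=(-1)^n\sigma_n(1-j)$ obeys the same first-order shift recurrence in $j$ as $Q_n(j)$, and then pinning down the constant of integration at $j=1$. Since $\sigma_n(j)$ is a polynomial in $j$ (being the unsigned Stirling number $c(j,j-n)$, of degree $2n$) and $Q_n(j)$ inherits polynomiality in $j$ through \eqref{Q-rec}, the claim amounts to a genuine polynomial identity in $t$, which is legitimate to verify at the level of polynomials.

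Writing $E(j,x)=\sum_n\sigma_n(j)x^n=\prod_{i=0}^{j-1}(1+ix)$ and $R(j,x)=\sum_n Q_n(j)x^n=\prod_{i=0}^{j-1}(1+ix)^{-1}$ for positive integer $j$, the shift relations $E(j+1,x)=(1+jx)E(j,x)$ and $(1+jx)R(j+1,x)=R(j,x)$ give, upon comparing coefficients,
\[\sigma_n(j+1)=\sigma_n(j)+j\,\sigma_{n-1}(j),\qquad Q_n(j)-Q_n(j-1)=-(j-1)\,Q_{n-1}(j),\]
and, being polynomial identities, they remain valid for arbitrary real $j$. Substituting $j\mapsto 1-j$ in the first relation and multiplying through by $(-1)^n$, the sign coming from the definition of $\tilde\sigma_n$ converts the factor $(1-j)\sigma_{n-1}(1-j)$ into $-(j-1)\tilde\sigma_{n-1}(j)$, yielding
\[\tilde\sigma_n(j)-\tilde\sigma_n(j-1)=-(j-1)\,\tilde\sigma_{n-1}(j),\]
precisely the recurrence satisfied by $Q_n$.

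The induction then closes at once. The base case $n=0$ reads $Q_0=\tilde\sigma_0=1$. Assuming $Q_{n-1}\equiv\tilde\sigma_{n-1}$ as polynomials, the common recurrence forces $f(j):=Q_n(j)-\tilde\sigma_n(j)$ to satisfy $f(j)=f(j-1)$ identically, hence to be constant, which I identify by evaluating at $j=1$: the product $\prod_{i=0}^{0}(1+ix)^{-1}=1$ gives $Q_n(1)=0$ for $n\ge 1$, while the $\sigma$-recurrence at $j=0$ together with $\sigma_n(1)=0$ (the sum is empty) forces $\sigma_n(0)=0$, so $\tilde\sigma_n(1)=0$ as well. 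Thus the constant is zero, completing the induction. The only delicate point in the plan is the sign bookkeeping under the substitution $j\mapsto 1-j$: one must track carefully how the $(-1)^n$ prefactor in the definition of $\tilde\sigma_n$ combines with the $(1-j)$ produced by the substitution to turn the $+\,j\sigma_{n-1}(j)$ term into $-(j-1)\tilde\sigma_{n-1}(j)$. Once this cancellation is verified, polynomiality, the induction, and the anchoring at $j=1$ are all mechanical.
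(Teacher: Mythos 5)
Your argument is correct and is essentially the paper's own proof: both derive the shift recurrences $\sigma_n(j+1)=\sigma_n(j)+j\sigma_{n-1}(j)$ and $Q_n(j)=Q_n(j+1)+jQ_{n-1}(j+1)$ from the generating products, extend them to real argument by polynomiality, and then induct on $n$, identifying the two families through the common recurrence and the vanishing at the anchor point. The only cosmetic differences are that you substitute $j\mapsto 1-j$ on the $\sigma$ side rather than on the $Q$ side and conclude via ``a polynomial with $f(j)=f(j-1)$ is constant'' instead of the paper's telescoping sum over integer $t$.
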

\begin{proof}
We denote for $j\ge1$,
\begin{align}
\phi_{j}(x)&:=\prod\limits_{i=0}^{j-1}\frac{1}{1+ix}=\sum_{n=0}^{\infty}Q_{n}(j)x^n, \label{A-def} \\
\psi_{j}(x)&:=\prod\limits_{i=1}^{j-1}(1-ix)=\sum_{n=0}^{\infty}(-1)^n\sigma_{n}(j)x^n. \label{B-def}
\end{align}
In particular, $\phi_1(x)=\psi_1(x)=1$.
Note that
\begin{align}
\phi_{j}(x)=(1+jx)\phi_{j+1}(x)=\sum_{n=0}^{\infty}\big(Q_{n}(j+1)+jQ_{n-1}(j+1) \big)x^n.
\end{align}
Comparing the coefficient of $x^n$ on both sides, we deduce that
\begin{align}\label{a-rec-1}
Q_{n}(j)=Q_{n}(j+1)+jQ_{n-1}(j+1).
\end{align}
Similarly, observing that $\psi_{j+1}(x)=\psi_{j}(x)(1-jx)$, we deduce that
\begin{align}\label{b-rec-1}
\sigma_{n}(j+1)=\sigma_{n}(j)+j\sigma_{n-1}(j).
\end{align}
Since \eqref{a-rec-1} and \eqref{b-rec-1} hold for all $j\ge1$ and both $Q_{n}(t)$ and $\sigma_{n}(t)$ are polynomials in $t$, we conclude that for any $t\in \mathbb{R}$\begin{align}\label{a-poly-1}
Q_{n}(t)=Q_{n}(t+1)+tQ_{n-1}(t+1),
\end{align}
\begin{align}\label{b-poly-1}
\sigma_{n}(t+1)=\sigma_{n}(t)+t\sigma_{n-1}(t).
\end{align}
Now we let $\overline{Q}_{n}(t)=Q_{n}(1-t)$. Then \eqref{a-poly-1} implies
\begin{align}\label{over-a}
\overline{Q}_{n}(t+1)=\overline{Q}_{n}(t)-t\overline{Q}_{n-1}(t).
\end{align}
Comparing \eqref{over-a} with \eqref{b-poly-1}, we see that the polynomials $\overline{Q}_{n}(t)$ and $(-1)^n$$\sigma_{n}(t)$ satisfy the same recurrence relation. Next, by direct computation, we find that
\begin{align}
Q_{0}(j)=\sigma_{0}(j)=1, \quad Q_{1}(j)=-\frac{j(j-1)}{2}, \quad  \sigma_{1}(j)=\frac{j(j-1)}{2}.
\end{align}
Thus
\begin{align*}
\overline{Q}_{1}(t)=-\sigma_{1}(t)=-\frac{t(t-1)}{2}.
\end{align*}
Now suppose that $\overline{Q}_{n-1}(t)=(-1)^{n-1}\sigma_{n-1}(t)$ for some $n\ge 2$. By \eqref{b-poly-1} and \eqref{over-a} we deduce that
\begin{align*}
\overline{Q}_{n}(t+1)-\overline{Q}_{n}(t)=(-1)^n{\sigma}_{n}(t+1)-(-1)^n\sigma_{n}(t).
\end{align*}
Summing over $t$ from 1 to $j-1$, we obtain
\begin{align}\label{over-a-b}
\overline{Q}_{n}(j)-\overline{Q}_{n}(1)=(-1)^n\sigma_{n}(j)-(-1)^n\sigma_{n}(1).
\end{align}
By definition, we have $Q_{n}(1)=(-1)^n\sigma_{n}(1)=0$ for $n\ge 1$. Therefore, \eqref{over-a-b} implies that $\overline{Q}_{n}(j)=(-1)^n\sigma_{n}(j)$ for any $j\geq 1$. This implies that $\overline{Q}_{n}(t)=(-1)^n\sigma_{n}(t)$ for any $t\in \mathbb{R}$.
\end{proof}

Since $G_0=H_0$, $G_1=H_1$, we get
\[\frac{G(x)-H(x)}{x^2}=\sum_{N=0}^\infty (G_{N+2}-H_{N+2})x^N.\]
Hence
\begin{align}\label{main}
\frac{G(x)-H(x)}{x^2}(1+a(x)x^2)=\sum_{N=0}^\infty(G_{N+2}-H_{N+2}+(G_N-H_N)a(x))x^N.
\end{align}
To prove Lemma \ref{coeff}, we need to compute the coefficient of $x^{n-1}$ in \eqref{main}. For $N\ge 2m$, we define
\begin{align}\label{Delta-defn}
\Delta(N,m):=G(N,m)-H(N,m)={{j}\choose{m}}Q_{N-2m}(j)-(-1)^N{{1-j}\choose{m}}\sigma_{N-2m}(j).
\end{align}
For example,
 \[\Delta(0,0)=\Delta(1,0)=0,\]
 \[\Delta(2,0)=\tfrac16j(j-1)(2j-1),\ \ \ \Delta(2,1)=2j-1,\]
 \[\Delta(3,0)=-\tfrac1{12}(j-1)^2j^2(2j-1),\ \ \ \Delta(3,1)=-\tfrac12(j-1)j(2j-1),\]
 \[\Delta(4,0)=\tfrac1{240}(-1 + j) j (-1 + 2 j) (-4 - 12 j + 17 j^2 - 10 j^3 + 5 j^4),\]
 \[\Delta(4,1)=\tfrac1{24} (-1 + j) j (-1 + 2 j) (2 - 3 j + 3 j^2),\ \ \ \Delta(4,2)=0,\]
 \[\Delta(5,0)=-\tfrac1{1440}(-1 + j)^2 j^2 (-1 + 2 j) (-12 - 56 j + 61 j^2 - 10 j^3 + 5 j^4),\]
 \[\Delta(5,1)=-\tfrac1{48}(-1 + j)^2 j^2 (-1 + 2 j) (6 - j + j^2),\ \ \ \Delta(5,2)=0.\]

\begin{proposition}\label{Delta-uv-exp}
Let $u=2j-1$ and $v=j(j-1)$. Then $\Delta(N,m)$ can be written as
\begin{align}
\Delta(N,m)=u(s_0+s_1v+\cdots +s_kv^k), \quad s_0, s_1, \dots,s_k \in \mathbb{Q},
\end{align}
where $N\ge2m$ and $k \le \left[\frac{2N-3m-1}{2} \right]$. Moreover, if $(N,m)\ne (2,1)$, then $s_0=0$.
\end{proposition}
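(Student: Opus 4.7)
The plan is to exploit the involution $j \mapsto 1-j$. Under this substitution, $u = 2j-1$ changes sign while $v = j(j-1)$ is invariant. Since $j = (u+1)/2$ we have $\mathbb{Q}[j] = \mathbb{Q}[u]$, and a polynomial $P \in \mathbb{Q}[j]$ satisfying $P(1-j) = -P(j)$ is odd as a polynomial in $u$, hence factors as $u$ times an even polynomial in $u$. Since even polynomials in $u$ are polynomials in $u^2 = 4v+1$, any such $P$ takes the form $u \cdot R(v)$ with $R \in \mathbb{Q}[v]$. The heart of the proof is therefore to establish the antisymmetry $\Delta(N,m)(1-j) = -\Delta(N,m)(j)$.

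To do this I would use Lemma \ref{replace} in its two equivalent forms $Q_n(1-j) = (-1)^n\sigma_n(j)$ and $\sigma_n(1-j) = (-1)^n Q_n(j)$ (the latter obtained from the former by setting $t = 1-j$). Writing $\Delta(N,m) = A - B$ with $A := \binom{j}{m}Q_{N-2m}(j)$ and $B := (-1)^N\binom{1-j}{m}\sigma_{N-2m}(j)$, the substitution $j \mapsto 1-j$ sends $A$ to $\binom{1-j}{m}(-1)^{N-2m}\sigma_{N-2m}(j) = B$, and symmetrically sends $B$ to $A$. Hence $\Delta(N,m)(1-j) = B - A = -\Delta(N,m)(j)$, which by the preceding paragraph yields the representation $\Delta(N,m) = u(s_0 + s_1 v + \cdots + s_k v^k)$ with rational coefficients.

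The degree bound follows from a straightforward count: by the recurrences \eqref{sigma-rec} and \eqref{Q-rec}, both $\sigma_n(j)$ and $Q_n(j)$ are polynomials in $j$ of degree $2n$, while $\binom{j}{m}$ has degree $m$, giving $\deg_j \Delta(N,m) \le 2(N-2m) + m = 2N - 3m$. Comparing with $\deg_j(u\,v^k) = 1 + 2k$ forces $k \le \lfloor (2N-3m-1)/2 \rfloor$. Finally, to see that $s_0 = 0$ except when $(N,m) = (2,1)$, I would evaluate at $j = 1$, where $u = 1$ and $v = 0$, so that $\Delta(N,m)|_{j=1} = s_0$. Using $Q_n(1) = \sigma_n(1) = \delta_{n,0}$ together with $\binom{1}{m} = 0$ for $m \ge 2$ and $\binom{0}{m} = 0$ for $m \ge 1$, a short case analysis on $m \in \{0,1,\ge 2\}$ shows $\Delta(N,m)|_{j=1} = 0$ unless $(N,m) = (2,1)$, in which case it equals $Q_0(1) = 1$. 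The main obstacle is simply recognizing the right involution; once Lemma \ref{replace} is invoked, everything else reduces to bookkeeping and degree counting.
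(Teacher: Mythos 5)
Your proof is correct and follows essentially the same route as the paper: the antisymmetry of $\Delta(N,m)$ under $j\mapsto 1-j$ via Lemma \ref{replace} (the paper packages the even/odd-in-$u$ bookkeeping in Lemma \ref{uv}), combined with the degree bound $\deg_j\Delta(N,m)\le 2N-3m$ coming from the degrees of $\sigma_n$ and $Q_n$ as in Lemma \ref{sigmaQ}. The only divergence is minor: for $s_0$ you evaluate at $j=1$ after the representation is in hand, whereas the paper shows $v\mid\Delta(N,m)$ using the divisibility of $\sigma_k$ and $Q_k$ by $v$ and treats the case $N=2m$ separately; both verifications are valid.
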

In order to prove this proposition, we need the following lemmas.
\begin{lemma}\label{uv}
 Any polynomial of $j$ can be written into a polynomial of $u=2j-1$ and $v=j(j-1)$, and the degree of $u$ in each term is at most 1. In order words, for a polynomial $\varphi(j)$, we can write it as
\[\varphi(j)=s_{0,0}+s_{0,1}v+...+s_{0,n_0}v^{n_0}+u(s_{1,0}+s_{1,1}v+...+s_{1,n_1}v^{n_1}).\]
Moreover, if all the coefficients of $\varphi(j)$ are rational numbers, then each $s_{i,l}\in \mathbb{Q}$.
\end{lemma}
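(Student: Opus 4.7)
The plan is to reduce everything to the single algebraic relation
\[u^2=(2j-1)^2=4j(j-1)+1=4v+1,\]
combined with the fact that $u$ determines $j$ via $j=(u+1)/2$. First I would invert the substitution and write the given polynomial $\varphi(j)\in\mathbb{Q}[j]$ as
\[\varphi(j)=\varphi\!\left(\tfrac{u+1}{2}\right)=\sum_{k=0}^{n}b_k u^k,\qquad b_k\in\mathbb{Q},\]
which is a polynomial in $u$ with rational coefficients because $j=(u+1)/2\in\mathbb{Q}[u]$.

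Next I would reduce the degree of $u$ in every monomial to at most $1$ by splitting according to the parity of $k$: for even $k=2m$, use $u^{2m}=(u^2)^m=(4v+1)^m\in\mathbb{Q}[v]$; for odd $k=2m+1$, use $u^{2m+1}=u(4v+1)^m\in u\,\mathbb{Q}[v]$. Collecting the contributions yields
\[\varphi(j)=p(v)+u\,q(v),\qquad p,q\in\mathbb{Q}[v],\]
which is exactly the asserted form with $s_{0,\ell}$ equal to the coefficient of $v^\ell$ in $p$ and $s_{1,\ell}$ equal to the coefficient of $v^\ell$ in $q$. Rational coefficients are preserved at every step, because each operation (the linear substitution $j\mapsto(u+1)/2$ and the binomial expansion of $(4v+1)^m$) involves only rational arithmetic.

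I do not anticipate any real obstacle here: conceptually, the lemma amounts to the statement that $\mathbb{Q}[j]\cong\mathbb{Q}[u,v]/(u^2-4v-1)$ is a free module of rank $2$ over $\mathbb{Q}[v]$ with basis $\{1,u\}$, and the two-step reduction above provides an explicit algorithm realising this isomorphism. The only mildly delicate point worth writing out carefully is the bookkeeping of rational coefficients through the substitution $j=(u+1)/2$, which contributes denominators that are powers of $2$ but remain rational; everything else is bare algebra.
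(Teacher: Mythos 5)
Your proof is correct, and it takes a somewhat different route from the paper. The paper argues by a short induction on the degree of $\varphi$: it checks degree $1$ trivially, rewrites the quadratic case explicitly as $j^2+aj+b=v+\tfrac{a+1}2u+b+\tfrac{a+1}2$, and states that higher degrees are handled similarly, i.e.\ it reduces powers of $j$ directly against $v=j^2-j$. You instead invert the substitution via $j=(u+1)/2$, rewrite $\varphi$ as a polynomial in $u$, and then reduce even powers of $u$ through the single relation $u^2=4v+1$, obtaining $\varphi=p(v)+u\,q(v)$ with $p,q\in\mathbb{Q}[v]$; rationality is preserved since the only denominators introduced are powers of $2$. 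Your version is more systematic and fully explicit where the paper's is a sketch, and your observation that $\mathbb{Q}[j]\cong\mathbb{Q}[u,v]/(u^2-4v-1)$ is a free $\mathbb{Q}[v]$-module with basis $\{1,u\}$ gives a bonus the paper's argument does not state: uniqueness of the coefficients $s_{i,\ell}$, which is in fact what makes the later notation $[uv^i]\Delta(N,m)$ well defined. The only cosmetic remark is that the lemma is stated for an arbitrary polynomial of $j$ (with the rationality claim as an add-on), but your argument transfers verbatim to any coefficient field containing $1/2$, so nothing is lost.
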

\begin{proof}
The assertions are clearly true if the polynomial has degree 1. If the polynomial has the form $j^2+aj+b$, then
\[j^2+aj+b=j(j-1)+\tfrac{a+1}2(2j-1)+b+\tfrac{a+1}2.\]
 Higher-degree cases can be done similarly.
\end{proof}
\begin{lemma}\label{sigmaQ}
For $k\ge1$,
\begin{align}
\sigma_k(j)&=\frac1{2^kk!}j^{2k}-\frac{2k+1}{3\cdot 2^k(k-1)!} j^{2k-1}+ O(j^{2k-2}),  \label{sigma-expansion} \\
Q_k(j)&=\frac{(-1)^k}{2^kk!}j^{2k}+\frac{(-1)^k(2k-5)}{3\cdot 2^k(k-1)!}j^{2k-1}+O(j^{2k-2}).\label{Q-expansion}
\end{align}
\end{lemma}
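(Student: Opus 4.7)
The plan is to prove \eqref{sigma-expansion} via an exp-log expansion of the generating function for $\sigma_k(j)$, and then deduce \eqref{Q-expansion} from it using the identity $Q_n(1-t)=(-1)^n\sigma_n(t)$ of Lemma \ref{replace}.

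Since $\sigma_k(j)$ is the $k$-th elementary symmetric polynomial in $1,2,\dots,j-1$,
\[
\sum_{k=0}^\infty \sigma_k(j)\,x^k=\prod_{i=1}^{j-1}(1+ix)=\exp\!\Big(\sum_{m=1}^\infty\frac{(-1)^{m-1}}{m}\,p_m(j-1)\,x^m\Big),
\]
where $p_m(n):=1^m+\cdots+n^m$. By Faulhaber's formulas $p_1(j-1)=\tfrac12(j^2-j)$, $p_2(j-1)=\tfrac13 j^3+O(j^2)$, and in general $p_m(j-1)$ has $j$-degree $m+1$. Extracting the $x^k$ coefficient of the exponential expresses $\sigma_k(j)$ as a sum over tuples $(n_1,n_2,\dots)$ with $\sum_m m n_m=k$, and each such summand has $j$-degree at most $\sum_m(m+1)n_m=k+\sum_m n_m$.

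Under the constraint $\sum_m m n_m=k$, the quantity $\sum_m n_m$ attains its maximum $k$ only at $n_1=k$, equals $k-1$ only at $(n_1,n_2)=(k-2,1)$, and all other tuples give $j$-degree at most $2k-2$. Hence only these two tuples contribute to $j^{2k}$ and $j^{2k-1}$. The first gives $\tfrac{1}{k!}\bigl(\tfrac{j^2-j}{2}\bigr)^k=\tfrac{j^{2k}}{2^k k!}-\tfrac{j^{2k-1}}{2^k(k-1)!}+O(j^{2k-2})$; the second (for $k\ge 2$) gives $-\tfrac{1}{2(k-2)!}\bigl(\tfrac{j^2-j}{2}\bigr)^{k-2}p_2(j-1)=-\tfrac{j^{2k-1}}{6\cdot 2^{k-2}(k-2)!}+O(j^{2k-2})$. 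Summing and simplifying yields the $j^{2k-1}$ coefficient $-\tfrac{1}{2^k(k-1)!}\bigl(1+\tfrac{2(k-1)}{3}\bigr)=-\tfrac{2k+1}{3\cdot 2^k(k-1)!}$, which establishes \eqref{sigma-expansion}; the $k=1$ case is verified directly from $\sigma_1(j)=\tfrac12(j^2-j)$.

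For \eqref{Q-expansion}, Lemma \ref{replace} gives $Q_k(j)=(-1)^k\sigma_k(1-j)$. Substituting \eqref{sigma-expansion} with $t=1-j$ and using $(1-j)^{2k}=j^{2k}-2k j^{2k-1}+O(j^{2k-2})$ and $(1-j)^{2k-1}=-j^{2k-1}+O(j^{2k-2})$, the $j^{2k-1}$ coefficient becomes $(-1)^k\bigl(-\tfrac{2k}{2^k k!}+\tfrac{2k+1}{3\cdot 2^k(k-1)!}\bigr)=\tfrac{(-1)^k(2k-5)}{3\cdot 2^k(k-1)!}$, matching \eqref{Q-expansion}. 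The main ``obstacle'' is simply the bookkeeping of which tuples in the exponential expansion contribute at the subleading order; once $(k,0,\dots)$ and $(k-2,1,0,\dots)$ are recognized as the only relevant partitions, the remainder reduces to routine arithmetic.
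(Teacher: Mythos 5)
Your proposal is correct and follows essentially the same route as the paper: the exp--log expansion you write down is exactly the known power-sum formula \eqref{sigma-p-exp} the paper invokes, you isolate the same two partitions $(n_1)=(k)$ and $(n_1,n_2)=(k-2,1)$ for the $j^{2k}$ and $j^{2k-1}$ coefficients, and you obtain \eqref{Q-expansion} by the same substitution $j\mapsto 1-j$ via Lemma \ref{replace}. No gaps; the arithmetic checks out, including the separate verification of the $k=1$ case.
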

\begin{proof}
We denote
\begin{align*}
p_{m}=\sum_{k=1}^{j-1}k^m.
\end{align*}
It is known that
\begin{align}\label{p-m}
p_{m}=\frac{1}{m+1}\sum_{i=0}^{m}(-1)^i\binom{m+1}{i}B_{i}j^{m+1-i}-j^m.
\end{align}
 From \eqref{p-m} we have
\begin{align*}
p_{1}=\frac{1}{2}j^2-\frac{1}{2}j, \quad p_{2}=\frac{1}{3}j^3-\frac{1}{2}j^2+\frac{1}{6}j.
\end{align*}
As a polynomial in $j$, the degree of $p_{m}$ is $m+1$.
Moreover, it is known that
\begin{align}\label{sigma-p-exp}
\sigma_{k}=(-1)^k\sum_{\begin{smallmatrix} m_1+2m_2+\cdots +km_k=k \\ m_1\ge 0, \cdots, m_{k}\ge 0\end{smallmatrix}}\prod\limits_{i=1}^{k}\frac{(-p_{i})^{m_i}}{m_{i}!i^{m_i}}.
\end{align}
As a polynomial in $j$, the degree of $\sigma_{k}$ is no more than
\begin{align*}
\deg\big(\prod\limits_{i=1}^{k}p_{i}^{m_i} \big)&=2m_1+3m_2+\cdots +(k+1)m_k \nonumber \\
&=(m_1+2m_2+\cdots +km_k)+(m_1+m_2+\cdots +m_k) \nonumber \\
&=k+(m_1+m_2+\cdots +m_k) \nonumber \\
&\leq 2k.
\end{align*}
Now we are going to find the coefficients of $j^{2k}$ and $j^{2k-1}$ in $\sigma_{k}$, respectively.

We consider the system of linear equations
\begin{align*}
\left\{\begin{array}{l}
2m_1+3m_2+\cdots +(k+1)m_k=2k \\
m_1+2m_2+\cdots +km_k=k,
\end{array}\right.
\end{align*}
which is equivalent to
\begin{align*}
\left\{\begin{array}{l}
m_1+m_2+\cdots +m_k=k \\
m_1+2m_2+\cdots +km_k=k.
\end{array}\right.
\end{align*}
It is clear that the unique solution to the equations above are $m_1=k$ and $m_i=0$ for $2\le i \le k$. Now we compute
\begin{align}
(-1)^k\frac{(-p_1)^k}{k!}&=\frac{1}{k!}\left(\frac{1}{2}j^2-\frac{1}{2}j \right)^k \nonumber \\
&=\frac{1}{k!}\left(\frac{1}{2^k}j^{2k}-\frac{k}{2^{k}}j^{2k-1}+O(j^{2k-2})\right). \label{high-1}
\end{align}
Similarly, we consider the system of linear equations
\begin{align*}
\left\{\begin{array}{l}
2m_1+3m_2+\cdots +(k+1)m_k=2k-1 \\
m_1+2m_2+\cdots +km_k=k.
\end{array}\right.
\end{align*}
The only solutions are $m_1=k-2$, $m_2=1$ and $m_i=0$ for all $i \ge 3$. The corresponding term in $\sigma_{k}$ is
\begin{align}
(-1)^k\frac{(-p_1)^{k-2}}{(k-2)!}\cdot \frac{-p_{2}}{2!}&=-\frac{1}{2(k-2)!}\left(\frac{1}{2}j^2-\frac{1}{2}j\right)^{k-2}\cdot \left( \frac{1}{3}j^3-\frac{1}{2}j^2+\frac{1}{6}j\right) \nonumber  \\
&=-\frac{1}{3\cdot 2^{k-1}(k-2)!}j^{2k-1}+O(j^{2k-2}). \label{high-2}
\end{align}
Adding \eqref{high-1} and \eqref{high-2} up, we obtain \eqref{sigma-expansion}.

Next, from the proof of Lemma \ref{replace}, we see that if we replace $j$ by $1-j$ in the polynomial expression of $(-1)^k\sigma_{k}$, then we get $Q_{k}$. Thus by replacing $j$ by $1-j$ in \eqref{sigma-expansion}, we obtain \eqref{Q-expansion}.
\end{proof}
Now we are able to prove Proposition \ref{Delta-uv-exp}.
\begin{proof}[Proof of Proposition \ref{Delta-uv-exp}]
From \eqref{Delta-defn} it is clear that $\Delta(N,m)$ is a polynomial of $j$ with rational coefficients. By Lemma \ref{uv}, we can write
\begin{align}\label{Delta-start-1}
\Delta(N,m)=s_{0,0}+s_{0,1}v+\cdots+s_{0,n_0}v^{n_0}+u\left(s_{1,0}+s_{1,1}v+\cdots +s_{1,n_1}v^{n_1}\right)
\end{align}
where each $s_{i,l}\in \mathbb{Q}$.
Replacing $j$ by $1-j$, then $u\mapsto -u$ and $v\mapsto v$.
Lemmas \ref{GNHN} and \ref{replace}  imply
\begin{align}\label{Delta-start-2}
-\Delta(N,m)=s_{0,0}+s_{0,1}v+\cdots+s_{0,n_0}v^{n_0}-u\left(s_{1,0}+s_{1,1}v+\cdots +s_{1,n_1}v^{n_1}\right).
\end{align}
From \eqref{Delta-start-1} and \eqref{Delta-start-2}, we deduce that
\begin{align*}
\Delta(N,m)=u\left(s_{1,0}+s_{1,1}v+\cdots +s_{1,n_1}v^{n_1}\right).
\end{align*}
Moreover, it is well known that $p_{k}$ is divisible by $v$ for any $k\ge 1$. Hence from \eqref{sigma-p-exp} we know $\sigma_{k}$ is divisible by $v$ for any $k\ge 1$. From \eqref{Q-rec} we know $Q_{k}$ is divisible by $v$ for any $k\ge 1$. Hence \eqref{Delta-defn} implies that $\Delta(N,m)$ is divisible by $v$ when $N-2m>0$. If $N-2m=0$, we have
\[\Delta(2m,m)=\binom{j}{m}-\binom{1-j}{m}.\]
Clearly, when $j=0$ or $j=1$, we have $\Delta(2m,m)=0$ except when $m=1$. Therefore $\Delta(2m,m)$ has a factor $j(j-1)$ except when $m=1$. Thus when $(N,m)\ne (2,1)$, $\Delta(N,m)$ is always divisible by $v$, which means $s_{1,0}=0$.

It remains to prove that $n_1\le \left[\frac{2N-3m-1}{2}\right]$. By the definition of $G(N,m)$ and Lemma \ref{sigmaQ},
\begin{align}\label{GNm}G(N,m)=\frac{(-1)^{N}}{m!(N-2m)!2^{N-2m}}j^{2N-3m}+O(j^{2N-3m-1}).\end{align}
Similarly,
\begin{align}\label{HNm}H(N,m)=\frac{(-1)^{N-m}}{m!(N-2m)!2^{N-2m}}j^{2N-3m}+O(j^{2N-3m-1}).\end{align}
Hence
\begin{align}\label{deltaNm}
\Delta(N,m)={\rm Const.}\ uv^{[\frac{2N-3m-1}2]}+ {\rm lower\ degree\ terms}.
\end{align}
This completes the proof of Proposition \ref{Delta-uv-exp}.
\end{proof}

Finally, we arrive at the stage to prove Lemma \ref{coeff}.
\begin{proof}[Proof of Lemma \ref{coeff}]
We plug
\[a=\sum_{k=0}^{\infty}a_{k}x^k\]
into \eqref{main} and expand it to a power series of $x$.
By direct calculations, we find that the coefficient of $x^{n-1}$ in \eqref{main} is the sum of the following terms:
\begin{align}
\Delta(2,1)a_{n-1},\label{sum-term-1}
\end{align}
\begin{align}
&(\Delta(N,m)+\Delta(N-2,m-1))\sum_{i_1+...+i_m=n-N+1}a_{i_1}\cdot\cdot\cdot a_{i_m} \label{sum-term-3} \\
&(3\le N\le n+1,\ 1\le m\le [\tfrac N2]), \nonumber
\end{align}
\begin{align}
\Delta(n+1,0).\label{sum-term-4}
\end{align}
Note that $\Delta(2,1)=2j-1=u$. \eqref{sum-term-1} gives the first term in \eqref{goal}. By Proposition \ref{Delta-uv-exp}, we can write each $\Delta(N,m)$ as
\[u(s_0+s_1v+...+s_kv^k), \quad s_0, s_1, \dots, s_k \in \mathbb{Q},\]
where $k\le [\frac{2N-3m-1}2]$.  Since $N\le n+1$, $m\ge0$, we have $[\frac{2N-3m-1}2]\le n$, which means the degree of of $v$ in $\Delta(N,m)$ is at most $n$.  This gives \eqref{goal} and clearly each polynomial $S_{i}(n)$ has rational coefficients. More explicitly, from \eqref{sum-term-3} and \eqref{sum-term-4}, we see that for $i=0,1,...,n$,
\begin{equation}\label{sirepa}
\begin{aligned}
&S_i(n)=[uv^i]\Delta(n+1,0)\\
&+\sum_{N=3}^{n+1}\sum_{m=1}^{[N/2]}\sum_{i_1+...+i_m=n-N+1}([uv^i]\Delta(N,m)+[uv^i]\Delta(N-2,m-1))a_{i_1}\cdot\cdot\cdot a_{i_m},
\end{aligned}
\end{equation}
where $[uv^i]\Delta(N,m)$ means the coefficient of the term $uv^i$ in $\Delta(N,m)$. In particular, $S_i(1)=[uv^i]\Delta(2,0)$ for $i=0,1$. And $S_i(2)=[uv^i]\Delta(3,0)+[uv^i]\Delta(3,1)a_0$ for $i=0,1,2$. Recall that $\Delta(2,0)=\frac{1}{6}uv$, $\Delta(3,0)=-\frac1{12}uv^2$, and $\Delta(3,1)=-\frac12uv$. So we have $S_{0}(1)=S_0(2)=0$, $S_{1}(1)=\frac{1}{6}$, $S_1(2)=-\frac12 a_0$, $S_2(2)=-\frac1{12}$.
\end{proof}

\section{Proof of Theorem \ref{thm1}}\label{proof}
Recall Ramanujan's Theta-operator $\Theta=q\partial_q$, which has the effect that
\begin{align*}
\Theta\left(\sum_{n=n_0}^{\infty}a(n)q^n \right):=\sum_{n=n_0}^{\infty}na(n)q^n.
\end{align*}
Let
\[P_{0}=\sum_{j=1}^\infty(-1)^{j-1}(2j-1)q^{j(j-1)/2}.\]
\begin{lemma}\label{P-lemma}
For any $m\ge 1$, we have
\[\Theta^m(P_{0})=-3P_{0}P_m,\]
where $P_m$ is a multivariate polynomial of $A_0,A_1,...,A_{m-1}$ with rational coefficients. Moreover, we have
\[P_1=A_0,\ \ P_{m+1}=\Theta(P_m)-3A_0P_m,\ m=1,2,....\]
\end{lemma}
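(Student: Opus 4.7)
The plan is to prove the lemma by a straightforward induction on $m$, with the base case being the key non-trivial step and the inductive step following from the Leibniz rule for $\Theta$ together with the observation that $\Theta A_i = A_{i+1}$.

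For the base case ($m=1$), I would first recognize $P_0$ via Jacobi's identity
\[\prod_{n=1}^{\infty}(1-q^n)^3=\sum_{j=0}^{\infty}(-1)^j(2j+1)q^{j(j+1)/2},\]
which, after shifting $j\mapsto j-1$, gives the product representation $P_0=\prod_{n=1}^{\infty}(1-q^n)^3$. Taking the logarithmic derivative with $\Theta=q\partial_q$, I would compute
\[\frac{\Theta P_0}{P_0}=3\sum_{n=1}^{\infty}\Theta\log(1-q^n)=-3\sum_{n=1}^{\infty}\frac{nq^n}{1-q^n}.\]
Expanding the geometric series and collecting terms by divisors, $\sum_{n\ge1}nq^n/(1-q^n)=\sum_{m\ge1}\sigma(m)q^m=A_0$. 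Thus $\Theta P_0=-3A_0 P_0=-3P_0P_1$ with $P_1=A_0$, giving both the base identity and the correct initial value.

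For the inductive step, assume $\Theta^m P_0=-3P_0P_m$ for some $m\ge1$ with $P_m\in\mathbb{Q}[A_0,\ldots,A_{m-1}]$. Applying $\Theta$ and using the Leibniz rule together with $\Theta P_0=-3A_0P_0$,
\[\Theta^{m+1}P_0=-3\bigl((\Theta P_0)P_m+P_0\,\Theta P_m\bigr)=-3P_0\bigl(\Theta P_m-3A_0 P_m\bigr),\]
which forces $P_{m+1}=\Theta P_m-3A_0P_m$, matching the stated recursion. For the polynomial structure, the crucial observation is that, directly from the definition $A_i=\sum_{m\ge 1}m^i\sigma(m)q^m$, one has $\Theta A_i=A_{i+1}$. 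Hence by the Leibniz rule, if $P_m\in\mathbb{Q}[A_0,\ldots,A_{m-1}]$ then $\Theta P_m\in\mathbb{Q}[A_0,\ldots,A_m]$, while $3A_0P_m\in\mathbb{Q}[A_0,\ldots,A_{m-1}]$; combining, $P_{m+1}\in\mathbb{Q}[A_0,\ldots,A_m]$, completing the induction.

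The only substantive ingredient is the identification $P_0=\eta^3$ via Jacobi's triple product in the base case; everything else is formal manipulation with $\Theta$ as a derivation, the recursion $\Theta A_i=A_{i+1}$, and induction. There is no real obstacle beyond invoking the classical Jacobi identity, so the proof is essentially a computation once that identity is in hand.
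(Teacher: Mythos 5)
Your proposal is correct and follows essentially the same route as the paper: identify $P_0=\prod_{n\ge1}(1-q^n)^3$ via Jacobi's identity, take the logarithmic derivative with $\Theta$ to get $\Theta(P_0)=-3A_0P_0$ (hence $P_1=A_0$), and then run the induction with the Leibniz rule and $\Theta(A_k)=A_{k+1}$ to obtain the recursion $P_{m+1}=\Theta(P_m)-3A_0P_m$ and the polynomiality with rational coefficients. No gaps; this matches the paper's argument step for step.
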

\begin{proof}
By Jacobi's identity \cite[Theorem 1.3.9]{Berndt}, we have
\begin{align}\label{Jacobi}
P_{0}=\prod\limits_{n=1}^{\infty}(1-q^n)^3.
\end{align}
Hence
\begin{align*}
\frac{\Theta(P_0)}{P_0}=-3\sum_{n=1}^{\infty}\frac{nq^{n}}{1-q^n}=-3\sum_{n=1}^{\infty}\sigma(n)q^n.
\end{align*}
This proves that $\Theta(P_0)=-3A_0P_0$ and hence $P_1=A_0$.

Next, note that
\begin{align*}
\Theta^{m+1}(P_0)=\Theta(-3P_0P_m)=-3\Theta(P_0)P_m-3P_0\Theta(P_m)=-3P_0(\Theta(P_m)-3A_0P_m).
\end{align*}
We deduce that
\begin{align}\label{P-rec}
P_{m+1}=\Theta(P_m)-3A_0P_m.
\end{align}

Suppose we have proved that $P_{m}$ is a polynomial of $A_0$, $A_1$, $\dots$, $A_{m-1}$ with rational coefficients, which is clear true for $m=1$. Then since $\Theta(A_k)=A_{k+1}$, from \eqref{P-rec} it follows that $P_{m+1}$ is a polynomial of $A_0$, $A_1$, $\dots$, $A_{m}$ with rational coefficients. Thus by induction on $m$ we know that the first assertion is true.
\end{proof}


Recall that in Lemma \ref{coeff}, $S_0(1)=0$, $S_{1}(1)=\frac{1}{6}$, and for $m \ge 2$, $S_{i}(m)$ are polynomials of $a_0$, $a_1$, $\cdots$, $a_{m-2}$ and independent of $j$. For $m\ge 1$, we recursively define
\begin{align}\label{rec}
C_{m}=-S_{0}(m)+\sum_{i=1}^{m}3\cdot 2^{i}S_{i}(m)(C_1,\cdots, C_{m-1})P_{i}.
\end{align}
In particular,
\begin{align}
C_{1}=6S_{1}(1)P_1=A_{0}.
\end{align}
The following lemma is a key for the proof of Theorem \ref{thm1}.
\begin{lemma}\label{sign}
Let $n\ge1$. Suppose $\lambda\ne C_{n}$. Then for large $k$,
\begin{align}\label{sign-eq}
(-1)^{k}(\lambda-C_{n})f(-(k+\Lambda_{n-1}(k^{-1})k^{-1})q^{1-k})>0,
\end{align}
where
\begin{align}\label{Lambda-defn}
\Lambda_{n-1}(x)=\sum_{i=1}^{n-1}C_{i}x^{i-1}+\lambda x^{n-1}.
\end{align}
\end{lemma}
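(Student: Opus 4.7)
The plan is to show that
\[(-1)^k f\bigl(-(k+\Lambda_{n-1}(k^{-1})k^{-1})q^{1-k}\bigr) = T_k\bigl[P_0(\lambda-C_n)k^{-(n-1)}+O(k^{-n})\bigr],\]
where $T_k:=(k+ak^{-1})^{k-2}q^{-k(k-1)/2}/k!>0$ with $a=\Lambda_{n-1}(k^{-1})$, and $P_0=\prod_{m\ge 1}(1-q^m)^3>0$ by \eqref{Jacobi}. Since $T_kP_0>0$, the sign of the left-hand side then agrees with that of $\lambda-C_n$ for all sufficiently large $k$, which is precisely \eqref{sign-eq}.

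First I would apply the standard pairing argument: expanding $f(-(k+ak^{-1})q^{1-k})=\sum_{\ell=0}^\infty(-1)^\ell u_\ell$ and coupling the $\ell$-th and $(2k-1-\ell)$-th terms for $0\le\ell\le k-1$ yields, after reindexing $j=k-\ell$,
\[(-1)^k f\bigl(-(k+ak^{-1})q^{1-k}\bigr)=\sum_{j=1}^k(-1)^{j-1}v_{k-j}+\varepsilon_k,\]
where $|\varepsilon_k|\le\sum_{\ell\ge 2k}u_\ell$ is super-exponentially smaller than $T_k$ (a ratio-test estimate on $u_{\ell+1}/u_\ell$ for $\ell\ge 2k$ gives a geometric decay with ratio $O(q^k)$). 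By Lemma \ref{lemmavj}, for any sufficiently large $N$ and $k\ge q^{-3N}$, the sequence $v_{k-j}$ is monotonically decreasing in $j$ for $j\ge N$, so the Leibniz alternating-series bound combined with \eqref{vkj} gives
\[\Bigl|\sum_{j=N+1}^k(-1)^{j-1}v_{k-j}\Bigr|\le v_{k-N-1}=O\bigl(q^{N(N+1)/2}T_k\bigr).\]
Choosing $N$ large enough (depending on $n$ and $q$) absorbs this into an $o(T_kk^{-n-1})$ error, so it suffices to analyze the finite sum $\sum_{j=1}^N(-1)^{j-1}v_{k-j}$.

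For each $j\in\{1,\ldots,N\}$, formula \eqref{vkj} gives $v_{k-j}=T_kq^{j(j-1)/2}\cdot\frac{(G(x)-H(x))(1+ax^2)}{x^2}$ with $x=k^{-1}$. Substituting $a(x)=\Lambda_{n-1}(x)$ so that $a_{m-1}=C_m$ for $1\le m\le n-1$ and $a_{n-1}=\lambda$, I invoke Lemma \ref{coeff} to Taylor-expand
\[\frac{(G-H)(1+ax^2)}{x^2}=\sum_{m=1}^{n}(2j-1)\Bigl(a_{m-1}+S_0(m)+\sum_{i=1}^m S_i(m)v^i\Bigr)x^{m-1}+O(x^n),\]
the remainder being uniform in $j\le N$ since $v=j(j-1)\le N^2$ is bounded. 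Summing $(-1)^{j-1}q^{j(j-1)/2}$ times this over $j=1,\ldots,N$ and extending the $j$-sum to $\infty$ (at cost of an exponentially small error in $N$), I collapse each $v^i$-term via Lemma \ref{P-lemma}: for $i\ge 1$,
\[\sum_{j=1}^\infty(-1)^{j-1}(2j-1)v^iq^{j(j-1)/2}=2^i\Theta^i(P_0)=-3\cdot 2^iP_0P_i,\]
while the $i=0$ case yields just $P_0$. Hence the $m$-th Taylor coefficient collapses to $P_0\bigl(a_{m-1}+S_0(m)-\sum_{i=1}^m 3\cdot 2^iS_i(m)P_i\bigr)=P_0(a_{m-1}-C_m)$ by the recursive definition \eqref{rec} of $C_m$. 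Thus all coefficients vanish for $m\le n-1$, while the coefficient at $m=n$ equals $P_0(\lambda-C_n)$, producing the claimed leading-order formula.

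The main technical obstacle is uniform error control: Lemma \ref{coeff} is an identity of formal power series, whereas the proof requires honest Taylor remainders for $\frac{(G(x)-H(x))(1+a(x)x^2)}{x^2}$ uniform in $j\le N$. Because for each such $j$ this function is analytic in $x$ near $0$ with Taylor coefficients bounded by polynomials in $v\le N^2$, Taylor's theorem with Lagrange remainder suffices; combining this with the exponentially small error from extending the $j$-sum to infinity closes the gap between the formal expansion and the genuine asymptotic identity, and this bookkeeping is the technical heart of the argument.
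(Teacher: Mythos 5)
Your overall strategy coincides with the paper's: pair $u_\ell$ with $u_{2k-1-\ell}$, use Lemma \ref{lemmavj} for the middle range, expand the near-diagonal terms $v_{k-j}$ via \eqref{vkj} and Lemma \ref{coeff}, and collapse the $j$-sums through Lemma \ref{P-lemma} so that the recursion \eqref{rec} kills every coefficient below order $k^{-(n-1)}$ and leaves $P_0(\lambda-C_n)$ at that order. That identification of the leading coefficient is correct and is exactly the paper's identity \eqref{lambda-eq}.

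However, there is a genuine gap in your error control. You truncate the $j$-sum at a cutoff $N$ ``depending on $n$ and $q$'' and claim the discarded tail is $o(T_k k^{-n-1})$ because it is $O(q^{N(N+1)/2}T_k)$. With $N$ fixed, $q^{N(N+1)/2}$ is a positive constant, so this bound is $\Theta(T_k)$ --- and in fact $v_{k-N-1}$ itself is genuinely of size $\asymp N^3 q^{N(N+1)/2}T_k$, since by Lemma \ref{coeff} its leading coefficient $(2j-1)(A_0+\tfrac16 j(j-1))$ is strictly positive at $j=N+1$. For every $n\ge 2$ this swamps the main term $P_0(\lambda-C_n)T_k k^{-(n-1)}$, so the sign conclusion does not follow. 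The same defect infects your step ``extending the $j$-sum to $\infty$ at cost of an exponentially small error in $N$'': exponentially small in a \emph{fixed} $N$ is still a constant in $k$. The repair is to let $N=N(k)\to\infty$, roughly $N\asymp\sqrt{\log k}$, which makes $q^{N^2/2}\le k^{-n-1}$ while remaining compatible with the hypothesis $k\ge q^{-3N}$ of Lemma \ref{lemmavj}; this is precisely what the paper does by working in the window $q^{-3N}\le k\le q^{-N^2/n}$ and covering all large $k$ by such windows. Once $N$ grows with $k$ you must also re-examine your ``uniform in $j\le N$'' Taylor remainders, since the coefficients are polynomials in $v\le N^2\asymp\log k$; they contribute only $O((\log k)^{Cn}k^{-n})=o(k^{-(n-1)})$, so this is harmless, but it has to be said. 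Note also that the paper sidesteps bounding the far tail $\sum_{j\ge 2N}(-1)^{j-1}v_{k-j}$ in absolute value altogether: by pairing consecutive terms and using the monotonicity from Lemma \ref{lemmavj} it shows this tail has the same sign as the main contribution, which is a slightly more robust alternative to your Leibniz estimate.
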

\begin{remark}
Here and in the proof below, we use the convention that in any summation $\sum_{i=a}^{b}$, if $a>b$, then we assume the sum is empty (zero). In \eqref{Lambda-defn}, when $n=1$, we have an empty sum and so $\Lambda_{0}(x)=\lambda$.
\end{remark}
\begin{proof}
For convenience, we define for $m\ge 0$
\begin{align}
\overline{P}_{m}:=\sum_{j=1}^{\infty}(-1)^{j-1}(2j-1)\left(j(j-1) \right)^{m}q^{j(j-1)/2}
\end{align}
and
\begin{align}
\overline{P}_{m,2N-1}:=\sum_{j=1}^{2N-1}(-1)^{j-1}(2j-1)\left(j(j-1) \right)^{m}q^{j(j-1)/2}.
\end{align}
It is clear that $\overline{P}_0=P_0$ and $\overline{P}_{m}=2^{m}\Theta^{m}(P_{0})=-3\cdot 2^mP_0P_m$ ($m\ge 1$). Moreover, \eqref{rec} implies that for any $m\ge 1$,
\begin{align}\label{lambda-eq}
C_{m}P_0+\sum_{i=0}^{m}S_{i}(m)(C_1,C_2,\cdots, C_{m-1})\overline{P}_{i}=0.
\end{align}

Now we set $a=\Lambda_{n-1}(x)$ in \eqref{f-exp} with $x=k^{-1}$.
From \eqref{vkj} and Lemma \ref{coeff} we deduce that
\begin{align}
v_{k-j}=&(G(k^{-1})-H(k^{-1}))\frac{1+\Lambda_{n-1}(k^{-1})k^{-2}}{k^{-2}}q^{j(j-1)/2} \nonumber \\
&\cdot(k+\Lambda_{n-1}(k^{-1})k^{-1})^{k-2}\frac1{k!}q^{-k(k-1)/2} \nonumber \\
=&(k+\Lambda_{n-1}(k^{-1})k^{-1})^{k-2}\frac1{k!}q^{-k(k-1)/2}\cdot q^{j(j-1)/2}\Big(\sum_{m=1}^{n}\xi_{m-1}k^{-(m-1)}+O(k^{-n})  \Big), \label{v-new}
\end{align}
where
\begin{align}
\xi_{m-1}=u\left(C_{m}+\sum_{i=0}^{m}S_{i}(m)(C_1,\cdots, C_{m-1})v^{i}\right), \quad 1\leq m \leq n-1,
\end{align}
and
\begin{align}
\xi_{n-1}=u\left(\lambda+\sum_{i=0}^{n}S_{i}(n)(C_1,\cdots, C_{n-1})v^{i}\right).
\end{align}
For convenience, from now to the end of proof, we will omit the variables $C_1, \cdots, C_{n-1}$ and simply write the polynomial $S_{i}(j)(C_1,\cdots, C_{j-1})$ as $S_{i}(j)$.

By Lemma \ref{lemmavj}, there exists a positive integer $N_1=N_1(q)$ such that
\begin{align*}
v_{k-N_1}>v_{k-N_1-1}>\cdots >v_{0}.
\end{align*}
Note that \eqref{Jacobi} implies $P_0>0$. If $\lambda-C_{n}<0$, by \eqref{lambda-eq} we know there exists a positive integer $N_2=N_2(q)$ such that for any integers $m\ge N_2$,
\begin{align}\label{less-than-0}
\lambda \overline{P}_{0,2m-1}+\sum_{i=0}^{n}S_{i}(n)\overline{P}_{i,2m-1}<0.
\end{align}
Let $N>\max\{N_1,N_2\}$. Using \eqref{v-new} and by direct calculations, we find that
\begin{align}
\sum_{j=1}^{2N-1}(-1)^{j-1}v_{k-j}&=(k+\Lambda_{n-1}(k^{-1})k^{-1})^{k-2}\frac{1}{k!}q^{-k(k-1)/2} \nonumber \\
&\Bigg(\sum_{\ell=1}^{n-1}\Big(C_{\ell}\overline{P}_{0,2N-1}+\sum_{i=0}^{\ell}S_{i}(\ell)\overline{P}_{i,2N-1}\Big)k^{-(\ell-1)}  \nonumber\\
&+\Big(\lambda\overline{P}_{0,2N-1}+\sum_{i=0}^{n}S_{i}(n)\overline{P}_{i,2N-1} \Big)k^{-(n-1)}+O(k^{-n}) \Bigg). \label{start}
\end{align}
Now for each $1\le \ell \le n-1$, from \eqref{lambda-eq} we deduce that
\begin{align}
C_{\ell}\overline{P}_{0}+\sum_{i=0}^{\ell}S_{i}(\ell)\overline{P}_{\ell}=0.
\end{align}
Hence
\begin{align}
&\left|\left(C_{\ell}\overline{P}_{0,2N-1}+\sum_{i=0}^{\ell}S_{i}(\ell)\overline{P}_{i,2N-1}\right)k^{-(\ell-1)} \right|\nonumber \\
=&\left|\sum_{j\ge 2N}\Big(C_{\ell}+\sum_{i=0}^{\ell}S_{i}(\ell)v^{i}\Big)(-1)^{j-1}(2j-1)q^{j(j-1)/2}\right| \nonumber \\
\le& |C_{\ell}|\left|\sum_{j\ge 2N}(-1)^{j-1}(2j-1)q^{j(j-1)/2} \right|\nonumber \\
&+\sum_{i=0}^{\ell}\left|S_{i}(\ell)\right|\left|\sum_{j\ge 2N}(-1)^{j-1}(2j-1)(j(j-1))^{i}q^{j(j-1)/2} \right| \nonumber \\
\le &|C_{\ell}|(4N-1)q^{N(2N-1)}+\sum_{i=0}^{\ell}|S_{i}(\ell)|(4N-1)(2N(2N-1))^{i}q^{N(2N-1)} \nonumber \\
=&O(q^{N^2}). \nonumber
\end{align}
Here for the last inequality, we have used the fact that when $N$ is sufficiently large, the sequence $(2j-1)(j(j-1))^iq^{j(j-1)/2}$ will be decreasing when $j\ge 2N$.

Note that when $k\le q^{-N^2/n}$, we have $q^{N^2}\le k^{-n}$.
Hence for each $k$ satisfying $q^{-3N}\le k\le q^{-N^2/n}$, \eqref{start} implies
\begin{align}
\sum_{j=1}^{2N-1}(-1)^{j-1}v_{k-j}=&(k+\Lambda_{n-1}(k^{-1})k^{-1})^{k-2}\frac{1}{k!}q^{-k(k-1)/2}\Big(\lambda\overline{P}_{0,2N-1}\nonumber\\
&+\sum_{i=0}^{n}S_{i}(n)\overline{P}_{i,2N-1} \Big)k^{-(n-1)}+O(k^{-n}) \Big).
\end{align}
From \eqref{less-than-0}, when $N$ is large enough, and for $k$ satisfying $q^{-3N}\le k \le q^{-N^2/n}$, we can guarantee that
\begin{align}\label{mid}
\Big(\lambda\overline{P}_{0,2N-1}+\sum_{i=0}^{n}S_{i}(n)\overline{P}_{i,2N-1} \Big)k^{-(n-1)}+O(k^{-n}) <0.
\end{align}
Therefore, for such $k$ and $N$, we have
\begin{align}
\sum_{j=1}^{2N-1}(-1)^{j-1}v_{k-j}<0
\end{align}
and by Lemma \ref{lemmavj}
\begin{align}
\sum_{j=2N}^{k}(-1)^{j-1}v_{k-j}<v_{k-2N-1}-v_{k-2N}<0.
\end{align}
So we have
\begin{align}\label{end-0}
(-1)^{k}\sum_{n=0}^{2k-1}(-1)^nu_{n}=\sum_{j=1}^{k}(-1)^{j-1}v_{k-j}<0.
\end{align}
By \eqref{mid} we know that there exists a constant $c>0$ such that for $k$ large enough,
\begin{align}\label{end-1}
\left|\sum_{n=0}^{2k-1}(-1)^nu_{n} \right|&>c\frac{(k+\Lambda_{n-1}(k^{-1})k^{-1})^{k-2}}{k!}q^{-k(k-1)/2}k^{-(n-1)}\nonumber \\
&>\frac{(k+\Lambda_{n-1}(k^{-1})k^{-1})^{2k}}{(2k)!}q^{k}=u_{2k}.
\end{align}
Moreover, since $u_n$ is decreasing when $n>k$,
\begin{align}\label{end-2}
u_{2k}>\left|\sum_{n=2k}^{\infty}u_{n}(-1)^n \right|.
\end{align}
From \eqref{end-0}, \eqref{end-1} and \eqref{end-2}, we deduce that
\begin{align*}
(-1)^kf(-(k+\Lambda_{n-1}(k^{-1})k^{-1})q^{1-k})=\sum_{n=0}^{\infty}(-1)^nu_n<0
\end{align*}
for large $k$.

Similarly, if $\lambda-C_{n}>0$, we  have for large $k$,
\begin{align*}
(-1)^kf(-(k+\Lambda_{n-1}(k^{-1})k^{-1})q^{1-k})>0.
\end{align*}
\end{proof}

\begin{proof}[Proof of Theorem \ref{thm1}]
For any $n\ge 1$, we choose $\lambda'<C_{n}<\lambda''$. Let
\begin{align*}
\xi_{k}'=-kq^{1-k}\left(1+\sum_{i=1}^{n-1}C_{i}k^{-1-i}+\lambda'k^{-1-n}\right), \\
\xi_{k}''=-kq^{1-k}\left(1+\sum_{i=1}^{n-1}C_{i}k^{-1-i}+\lambda''k^{-1-n}\right).
\end{align*}
By Lemma \ref{sign} we have $f(\xi_{k}')f(\xi_{k}'')<0$. Therefore, by the Intermediate Value Theorem, there exists a root in the interval $(\xi_{k}',\ \xi_{k}'')$. Thanks to \eqref{ratio}, when $k$ is large enough, we know this interval contains only one root and this root must be $x_{k}$ (see also \cite[Proof of Theorem 1]{Zhang}).
Thus we can write the root as
\begin{align}
x_{k}=-kq^{1-k}\left(1+\sum_{i=1}^{n-1}C_{i}k^{-1-i}+\theta_{n}(k)k^{-1-n}\right).
\end{align}
By letting $\lambda'$ and $\lambda''$ tend to $C_{n}$ from the left side and right side, respectively, we see that we must have
\begin{align}
\lim\limits_{k\rightarrow \infty}\theta_{n}(k)=C_{n}.
\end{align}
Thus $\theta_{n}(k)=C_{n}+o(1)$ as $k$ tends to infinity. This means
\begin{align}
x_{k}=-kq^{1-k}\left(1+\sum_{i=1}^{n-1}C_{i}k^{-1-i}+C_{n}k^{-1-n}+o(k^{-1-n})\right).
\end{align}
This proves \eqref{xk-thm} for any $n\ge 1$.

From the definition \eqref{rec} and Lemma \ref{P-lemma}, it is clear that $C_{i}$ is a multivariate polynomial of $A_0$, $A_1$, $\dots$, $A_{i-1}$ with rational coefficients.
\end{proof}

\section{Representations of $C_n$}\label{representation}
\subsection{Representation of $C_{n}$ using $A_0,A_1,\cdots, A_{n-1}$}
We have seen in \eqref{rec} that
\begin{equation}\label{cn}
C_n=-S_0(n)+6S_1(n)P_1+12S_2(n)P_2+...+3\cdot 2^nS_n(n)P_n,
\end{equation}
where each $P_i$ is given by a recursive formula in Lemma \ref{P-lemma}, and each $S_i(n)$ can be determined from  \eqref{sirepa} by setting $a_i=C_{i+1}$. Indeed, for $i=0,1,...,n$,
\begin{equation}\label{sirep}
\begin{aligned}
&S_i(n)=[uv^i]\Delta(n+1,0)\\
&+\sum_{N=3}^{n+1}\sum_{m=1}^{[N/2]}\sum_{i_1+...+i_m=n-N+m+1}([uv^i]\Delta(N,m)+[uv^i]\Delta(N-2,m-1))C_{i_1}\cdot\cdot\cdot C_{i_m},
\end{aligned}
\end{equation}
where $\Delta(N,m)$ was given in \eqref{Delta-defn}, and $[uv^i]\Delta(N,m)$ means the coefficient of the term $uv^i$ in the representation of $\Delta(N,m)$ in Proposition \ref{Delta-uv-exp}.
\begin{proposition}\label{sn}
For $n\ge 1$,
\[S_n(n)=\frac{(-1)^{n-1}}{3\cdot 2^n(n-1)!}.\]
\end{proposition}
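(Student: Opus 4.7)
The plan is to show that when we extract the coefficient of $uv^n$ from the formula \eqref{sirep} for $S_n(n)$, only one term on the right-hand side actually contributes, and then to compute that single coefficient directly from the leading asymptotics of $\sigma_{n+1}(j)$ and $Q_{n+1}(j)$ given in Lemma \ref{sigmaQ}.

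Step 1 (isolate the surviving term). By Proposition \ref{Delta-uv-exp}, the $v$-degree of $\Delta(N,m)$ is at most $\bigl[\frac{2N-3m-1}{2}\bigr]$. Inspecting the sum \eqref{sirep} with $i=n$: for $\Delta(N,m)$ with $N\le n+1$ and $m\ge 1$ one needs $2N-3m-1\ge 2n$, forcing $m\le 1/3$, which is impossible; for $\Delta(N-2,m-1)$ with the same constraints one needs $2N-3m-2\ge 2n$, which forces $m\le 0$, again impossible. Hence the only surviving contribution is the isolated term
\begin{equation*}
S_n(n)=[uv^n]\Delta(n+1,0).
\end{equation*}

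Step 2 (reduce to a coefficient in $j$). By Lemma \ref{GNHN}, $\Delta(n+1,0)=Q_{n+1}(j)-(-1)^{n+1}\sigma_{n+1}(j)$. By \eqref{deltaNm} this polynomial has the form $c\,uv^n+\text{lower degree terms in }v$, so its top-degree monomial in $j$ is $c\cdot(2j)(j^2)^n=2c\,j^{2n+1}$. Therefore $c=\tfrac12\cdot[j^{2n+1}]\Delta(n+1,0)$. I will also note that the coefficient of $j^{2n+2}$ in $\Delta(n+1,0)$ vanishes (this is automatic from Lemma \ref{sigmaQ}, matching the fact that the top $j$-power cannot exceed $2n+1$ as \eqref{deltaNm} predicts).

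Step 3 (plug in Lemma \ref{sigmaQ}). Writing out the two expansions,
\begin{align*}
\sigma_{n+1}(j)&=\frac{1}{2^{n+1}(n+1)!}j^{2n+2}-\frac{2n+3}{3\cdot 2^{n+1}n!}j^{2n+1}+O(j^{2n}),\\
Q_{n+1}(j)&=\frac{(-1)^{n+1}}{2^{n+1}(n+1)!}j^{2n+2}+\frac{(-1)^{n+1}(2n-3)}{3\cdot 2^{n+1}n!}j^{2n+1}+O(j^{2n}),
\end{align*}
the $j^{2n+2}$ terms cancel in $\Delta(n+1,0)=Q_{n+1}(j)-(-1)^{n+1}\sigma_{n+1}(j)$, and the $j^{2n+1}$ coefficient becomes
\begin{equation*}
\frac{(-1)^{n+1}(2n-3)+(-1)^{n+1}(2n+3)}{3\cdot 2^{n+1}n!}=\frac{(-1)^{n+1}\cdot 4n}{3\cdot 2^{n+1}n!}=\frac{(-1)^{n+1}}{3\cdot 2^{n-1}(n-1)!}.
\end{equation*}
Dividing by $2$ as in Step 2 gives $S_n(n)=\dfrac{(-1)^{n-1}}{3\cdot 2^n(n-1)!}$, as claimed.

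The only genuinely delicate point is Step 1: one must be certain that no combination of $\Delta(N,m)$ or $\Delta(N-2,m-1)$ with $m\ge 1$ can generate a $v^n$ term, and the degree bound from Proposition \ref{Delta-uv-exp} handles this cleanly. The remaining computation is a routine application of the asymptotic formulas already established in Lemma \ref{sigmaQ}.
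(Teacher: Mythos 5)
Your proof is correct and follows essentially the same route as the paper: the degree bound of Proposition \ref{Delta-uv-exp} shows that only $\Delta(n+1,0)$ can contribute a $uv^n$ term in \eqref{sirep}, and Lemma \ref{sigmaQ} then gives its coefficient via the $j^{2n+1}$ term of $Q_{n+1}-(-1)^{n+1}\sigma_{n+1}$. Your extra details (the explicit exclusion of the $m\ge 1$ terms and the factor $\tfrac12$ relating $[j^{2n+1}]$ to $[uv^n]$) are exactly the steps the paper leaves implicit.
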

\begin{proof}
By Lemma \ref{sigmaQ},
\[G(N,0)=Q_N=\frac{(-1)^N}{2^NN!}j^{2N}+\frac{(-1)^N(2N-5)}{3\cdot 2^N(N-1)!}j^{2N-1}+O(j^{2N-2}),\]
\[H(N,0)=(-1)^N\sigma_N=\frac{(-1)^N}{2^NN!}j^{2N}-\frac{(-1)^N(2N+1)}{3\cdot 2^N(N-1)!}j^{2N-1}+O(j^{2N-2}).\]
Then
\begin{align}\label{deltaN0}\Delta(N,0)=\frac{(-1)^N}{3\cdot 2^{N-2}(N-2)!}j^{2N-1}+O(j^{2N-2}).\end{align}
So
\begin{equation}\label{delta}
\Delta(n+1,0)=\frac{(-1)^{n+1}}{3\cdot 2^n(n-1)!}uv^n+\ {\rm lower\ degree\ terms}.
\end{equation}
Thanks to \eqref{deltaNm}, we know that in \eqref{sirep}, $\Delta(n+1,0)$ is the only term containing $uv^n$, so $S_n(n)$ is exactly the coefficient of $uv^n$ in \eqref{delta}.
\end{proof}
\begin{proposition}\label{s0}
For $n\ge 3$,
\[S_0(n)=\sum_{i_1+i_2=n-1}C_{i_1}C_{i_2}. \]
\end{proposition}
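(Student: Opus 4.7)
The plan is to apply the explicit formula \eqref{sirep} for $S_i(n)$ with $i=0$ and exploit the vanishing property established in Proposition \ref{Delta-uv-exp} to collapse the sum to a single surviving contribution.

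Setting $i=0$ in \eqref{sirep} gives
\[
S_0(n)=[uv^0]\Delta(n+1,0)+\sum_{N=3}^{n+1}\sum_{m=1}^{[N/2]}\sum_{i_1+\cdots+i_m=n-N+m+1}\bigl([uv^0]\Delta(N,m)+[uv^0]\Delta(N-2,m-1)\bigr)C_{i_1}\cdots C_{i_m}.
\]
By Proposition \ref{Delta-uv-exp}, the coefficient $s_0$ in the $(u,v)$-expansion of $\Delta(N,m)$ vanishes whenever $(N,m)\neq (2,1)$. Since $\Delta(2,1)=u=2j-1$, we have $[uv^0]\Delta(2,1)=1$, and $[uv^0]\Delta(N,m)=0$ in every other case.

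First I would dispose of the loose term: for $n\ge 3$ we have $(n+1,0)\neq (2,1)$, hence $[uv^0]\Delta(n+1,0)=0$. Next, inside the double sum we always have $N\ge 3$, so $(N,m)\neq (2,1)$ and therefore $[uv^0]\Delta(N,m)=0$ throughout. The only surviving contribution comes from $[uv^0]\Delta(N-2,m-1)$, which is nonzero precisely when $(N-2,m-1)=(2,1)$, that is, $N=4$ and $m=2$. Note that $m=2\le [N/2]=2$ and $N=4\le n+1$ (using $n\ge 3$), so this index is actually present in the summation.

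Substituting $(N,m)=(4,2)$ gives the innermost index set $i_1+i_2=n-N+m+1=n-1$ and coefficient $[uv^0]\Delta(2,1)=1$, yielding
\[
S_0(n)=\sum_{i_1+i_2=n-1}C_{i_1}C_{i_2},
\]
as claimed. There is no real obstacle here beyond bookkeeping; the work was already done in Proposition \ref{Delta-uv-exp}, which ensures that the constant (in $v$) part of $\Delta(N,m)$ vanishes outside the single exceptional pair $(2,1)$.
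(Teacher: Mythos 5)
Your proof is correct and follows essentially the same route as the paper: apply \eqref{sirep} with $i=0$ and use Proposition \ref{Delta-uv-exp} (the vanishing of $s_0$ for $(N,m)\neq(2,1)$ together with $\Delta(2,1)=u$) to isolate the single surviving term $(N,m)=(4,2)$, giving $\sum_{i_1+i_2=n-1}C_{i_1}C_{i_2}$. Your version just spells out the bookkeeping (locating the exceptional pair as $\Delta(N-2,m-1)$ with $N=4\le n+1$, $m=2$) that the paper's terser proof leaves implicit.
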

\begin{proof}
Recall that $S_0(n)=0$ if $n=1,\ 2$. From Proposition \ref{Delta-uv-exp} we know that when $n\ge3$, any other $\Delta(N,m)$ with $(N,m)\ne(2,1)$ in \eqref{sirep} must have the factor $uv$. Recall that $\Delta(2,1)=u$. So \[S_0(n)=[u]\Delta(2,1)\sum_{i_1+i_2=n-1}C_{i_1}C_{i_2}=\sum_{i_1+i_2=n-1}C_{i_1}C_{i_2}.\]
\end{proof}

\begin{proposition}\label{si}
For $1\le i\le n-1$, $S_{i}(n)$ is a polynomial of $C_1,...,C_{n-i}$ with degree  $\le\min\{[\frac{2(n-i)+1}3],[\frac{n+1}2]\}$. Moreover, this polynomial has the form
\begin{align}\label{cnrep}\tilde F(C_1,...,C_{n-i-1})+\frac{(-1)^{i}}{2^{i}i!}C_{n-i},\end{align}
when $\tilde F$ is a polynomial depending on $n$ and $i$.
\end{proposition}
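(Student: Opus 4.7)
The plan is to work directly from the explicit formula \eqref{sirep} for $S_{i}(n)$, analyzing it term by term using the $uv$-expansion structure from Proposition \ref{Delta-uv-exp} and the leading coefficient formulas \eqref{GNm}--\eqref{HNm} together with Lemma \ref{sigmaQ}. Every monomial contributing to $S_i(n)$ arises from a choice of $(N,m)$ with $3 \le N \le n+1$ and $1 \le m \le [N/2]$ together with an index tuple $(i_1,\ldots,i_m)$ satisfying $i_1+\cdots+i_m=n-N+m+1$ with each $i_\ell \ge 1$. The key degree bound from Proposition \ref{Delta-uv-exp} is that $[uv^i]\Delta(N,m)\ne 0$ requires $i \le [(2N-3m-1)/2]$.

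First I would establish that $S_i(n)$ depends only on $C_1,\ldots,C_{n-i}$. The maximum single index in a monomial $C_{i_1}\cdots C_{i_m}$ is $n-N+2$, so it suffices to show $N \ge i+2$ whenever the coefficient is nonzero. A short parity check of $i \le [(2N-3m-1)/2]$ yields $N \ge i+(3m+1)/2$ when $m$ is odd and $N \ge i+3m/2+1$ when $m$ is even; in every case $N \ge i+2$. Applying the same analysis to $\Delta(N-2,m-1)$ yields an even stronger lower bound on $N$. The degree bound then follows by combining the same inequality with $N \le n+1$: for $m$ odd one obtains $m \le (2(n-i)+1)/3$, and for $m$ even $m \le 2(n-i)/3 \le (2(n-i)+1)/3$; the other piece $m \le [N/2] \le [(n+1)/2]$ is immediate.

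Next, to isolate the coefficient of $C_{n-i}$, I would argue that any monomial containing $C_{n-i}$ forces $N=i+2$. If one $i_\ell$ equals $n-i$ and the others are $\ge 1$, then $n-N+m+1 \ge n-i+(m-1)$ gives $N \le i+2$; combined with $N \ge i+2$ this pins $N=i+2$ and forces the remaining indices to all equal $1$. For $m\ge 2$ this would produce a term proportional to $C_{n-i}C_1^{m-1}$, but a direct check shows both $\Delta(i+2,m)$ and $\Delta(i,m-1)$ have maximum $v$-degree strictly less than $i$ when $m\ge 2$, so these contributions vanish. Only $m=1,\ N=i+2$ survives, with coefficient $[uv^i]\Delta(i+2,1)+[uv^i]\Delta(i,0)$; the second term vanishes because $\Delta(i,0)$ has max $v$-degree $i-1$, and the first is the leading $uv^i$-coefficient of $\Delta(i+2,1)=j\,Q_i(j)-(-1)^i(1-j)\sigma_i(j)$, which by Lemma \ref{sigmaQ} has leading $j^{2i+1}$ coefficient $\frac{(-1)^i}{2^{i-1}i!}$. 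Matching this to $2\cdot[uv^i]\Delta(i+2,1)\cdot j^{2i+1}$ (the leading $j$-contribution of $uv^i$) yields $[uv^i]\Delta(i+2,1)=\frac{(-1)^i}{2^i\, i!}$, exactly the claimed coefficient.

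The main obstacle I anticipate is the $m\ge 2$ vanishing: showing that potential contributions $C_{n-i}C_1^{m-1}$ all drop out requires verifying $k_{N,m}<i$ for both $\Delta(i+2,m)$ and $\Delta(i,m-1)$ across both parities of $m$, which is a small but somewhat delicate finite check using Proposition \ref{Delta-uv-exp}. Once that is in place, the remainder of the proof is bookkeeping with leading coefficients.
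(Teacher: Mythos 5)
Your proof is correct and follows essentially the same route as the paper: both work directly from \eqref{sirep}, use the $v$-degree bound $\left[\frac{2N-3m-1}{2}\right]$ from Proposition \ref{Delta-uv-exp} to constrain the admissible $(N,m)$, and extract the coefficient of $C_{n-i}$ from the leading $j$-coefficients of $\Delta(i+2,1)$ and $\Delta(i,0)$ via Lemma \ref{sigmaQ}. Your explicit check that the potential $C_{n-i}C_1^{m-1}$ contributions with $m\ge 2$ vanish (because the relevant $\Delta$'s have $v$-degree below $i$) is a detail the paper's proof passes over silently, so it is a welcome addition rather than a deviation.
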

\begin{proof}
To show that the subscripts of $C_k$'s in $S_i$ are at most $n-i$, we need to analyze the terms associated with each $[uv^i]\Delta(N,m)$ in \eqref{sirep}.  Recall that \eqref{deltaNm} gives the restriction $[\frac{2N-3m-1}2]\ge i$ on these $(N,m)$.  We consider two different cases. When $m\ge1$, we have $N-2\ge[\frac{2N-3m-1}2]\ge i$, so $N\ge i+2$. The term associated with $[uv^i]\Delta(N,m)$ is
\begin{equation}\label{degree}
\sum_{i_1+...+i_m=n-N+m+1}C_{i_1}...C_{i_m}+\sum_{i_1+...+i_{m+1}=n-N+m}C_{i_1}...C_{i_{m+1}}.
\end{equation}
(Note that when $N=n,\ n+1$, the second sum vanishes.) So the maximal subscript \[\max_li_l\le n-N_{\min}+2=n-i.\] When $m=0$, the term associated with $\Delta(N,0)$ is $C_{n-N}$ if $N\le n-1$, and is a constant if $N=n,\ n+1$. Since $N-1=[\frac{2N-3m-1}2]\ge i$, we have $N\ge i+1$. So the subscript \[n-N\le n-N_{\min}=n-i-1<n-i.\] Consequently,  $S_{i}(n)$ contains only $C_1,...,C_{n-i}$.

Since $[\frac{2N-3m-1}2]\ge i$, we have $m\le [\frac{2N-2i-1}3]$. The degree of \eqref{degree}, as a polynomial of $C_1,...,C_{n-i}$,  is $m+1$ when $N\le n-1$, and is $m$ when $N=n,\ n+1$. When $N\le n-1$,
\[m+1\le[\tfrac{2(n-1)-2i-1}3]+1=[\tfrac{2(n-i)}3].\]
When $N=n,\ n+1$,
 \[m\le[\tfrac{2(n+1)-2i-1}3]=[\tfrac{2(n-i)+1}3].\]
Recall that $m\le [N/2]$. So the degree of $S_{i}(n)$, as a polynomial of $C_1,...,C_{n-i}$, is  at most $\min\{[\frac{2(n-i)+1}3],[\frac{n+1}2]\}$.

The coefficient of $C_{n-i}$ in \eqref{sirep} is $[uv^i]\Delta(i+2,1)+[uv^i]\Delta(i,0)$. By \eqref{GNm}, \eqref{HNm}, and \eqref{delta}, we have
\[\Delta(i+2,1)=\frac{(-1)^{i}}{2^{i}i!}uv^{i}+\ {\rm lower\ degree\ terms},\]
\[\Delta(i,0)=\frac{(-1)^i}{3\cdot 2^{i-1}(i-2)!}uv^{i-1}+\ {\rm lower\ degree\ terms},\]
which implies $[uv^i]\Delta(i+2,1)+[uv^i]\Delta(i,0)=\frac{(-1)^{i}}{2^{i}i!}$.
\end{proof}
\begin{proposition}\label{Cn-poly-prop}
For any $n\ge 1$,
\begin{align}\label{add-Cn}
C_n=F(A_0,...,A_{n-2})+\frac{(-1)^{n-1}}{(n-1)!}A_{n-1},
\end{align}
where $F$ is a polynomial depending on $n$ and has degree at most $n$.
\end{proposition}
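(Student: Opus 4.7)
The plan is to prove the proposition by strong induction on $n$, using the recursion \eqref{cn} together with the structural facts gathered in Lemma \ref{P-lemma} and Propositions \ref{sn}, \ref{s0}, \ref{si}. The base case $n=1$ is immediate, since $C_1=A_0=\tfrac{(-1)^0}{0!}A_0$ with $F=0$.

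The first substantive step is a subsidiary induction on the $P_n$'s, establishing that for every $n\ge 1$,
\[
P_n=A_{n-1}+G_n(A_0,\dots,A_{n-2}),
\]
where $G_n$ is a polynomial of degree at most $n$ with rational coefficients. This follows from the recursion $P_{m+1}=\Theta(P_m)-3A_0 P_m$ of Lemma \ref{P-lemma} together with $\Theta(A_k)=A_{k+1}$: the only way to produce $A_{n-1}$ is by $\Theta$ acting on the leading $A_{n-2}$ of $P_{n-1}$, while $-3A_0P_{n-1}$ introduces no $A_{n-1}$. Combining this with $S_n(n)=\tfrac{(-1)^{n-1}}{3\cdot 2^n(n-1)!}$ from Proposition \ref{sn}, the last summand in \eqref{cn} contributes exactly
\[
3\cdot 2^n S_n(n)\,P_n=\tfrac{(-1)^{n-1}}{(n-1)!}A_{n-1}+\tfrac{(-1)^{n-1}}{(n-1)!}G_n(A_0,\dots,A_{n-2}),
\]
producing the linear term claimed in \eqref{add-Cn}.

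It then remains to show that all other summands in \eqref{cn} lie in $\mathbb{Q}[A_0,\dots,A_{n-2}]$. By Proposition \ref{s0}, $S_0(n)=\sum_{i_1+i_2=n-1}C_{i_1}C_{i_2}$, which by the inductive hypothesis is in $\mathbb{Q}[A_0,\dots,A_{n-2}]$. For $1\le i\le n-1$, Proposition \ref{si} gives $S_i(n)\in\mathbb{Q}[C_1,\dots,C_{n-i}]$, which by the inductive hypothesis lies in $\mathbb{Q}[A_0,\dots,A_{n-i-1}]\subseteq\mathbb{Q}[A_0,\dots,A_{n-2}]$ (using $i\ge 1$), and $P_i\in\mathbb{Q}[A_0,\dots,A_{i-1}]\subseteq\mathbb{Q}[A_0,\dots,A_{n-2}]$. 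Collecting all pieces yields $C_n=F(A_0,\dots,A_{n-2})+\tfrac{(-1)^{n-1}}{(n-1)!}A_{n-1}$ for some polynomial $F$.

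The main obstacle, as I expect, is the degree bound $\deg F\le n$. To handle it, I would strengthen the inductive hypothesis to include $\deg C_k\le k$, and verify $\deg P_n\le n$ (which follows immediately from $P_{m+1}=\Theta(P_m)-3A_0P_m$ since $\Theta$ preserves degree and multiplication by $A_0$ raises it by one). The key bookkeeping is on the terms in \eqref{cn} coming from \eqref{sirep}: a monomial contribution $C_{i_1}\cdots C_{i_m}$ from a pair $(N,m)$ with $m\ge 1$ has $i_1+\cdots+i_m\le n-N+m+1$, hence $A$-degree at most $n-N+m+1$, so after multiplication by $P_i$ (degree $\le i$) the $(i,N,m)$-term has total degree at most $n-N+m+1+i$. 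The constraint $\lfloor\tfrac{2N-3m-1}{2}\rfloor\ge i$ from Proposition \ref{Delta-uv-exp} forces $N\ge i+m+1$ when $m\ge 1$, which gives total degree at most $n$. The $m=0$ case, where the contribution is $C_{n-N}P_i$ with $N\ge i+1$, yields degree at most $n-1$; the constant-times-$P_i$ pieces from $N=n,n+1$ give degree $\le i\le n$; the $S_0(n)$ piece has degree $\le n-1$; and the $3\cdot 2^n S_n(n)P_n$ term has degree $\le n$. Combining these bounds completes the inductive step.
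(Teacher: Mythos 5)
Your proposal is correct and follows essentially the same route as the paper: it identifies the coefficient of $A_{n-1}$ via $P_n=A_{n-1}+\text{(higher degree terms)}$ together with Proposition \ref{sn}, and obtains the degree bound by combining $\deg C_l\le l$, $\deg P_i\le i$, and the constraint $i\le\bigl[\tfrac{2N-3m-1}{2}\bigr]$ from \eqref{sirep}, which is exactly the paper's estimate $\bigl[\tfrac{2N-3m-1}{2}\bigr]+(n-N+m+1)\le n$. Your write-up is in fact somewhat more detailed than the paper's (which dismisses the remaining terms with ``other terms can be estimated similarly''), and all the case checks are sound.
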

\begin{proof}
From the definition of $P_n$ in Lemma \ref{P-lemma}, one can prove by induction that for any $n\ge 1$, the degree of the multivariate polynomial $P_n$ is $n$, and
\begin{align}\label{Pnrep}
P_n=A_{n-1}+ {\rm higher\ degree\ terms} \ ({\rm  without}\ A_{n-1}).
\end{align}
So the coefficient of $A_{n-1}$ in $C_n$ is exactly $\frac{(-1)^{n-1}}{(n-1)!}$ by \eqref{cn} and Proposition \ref{sn}. To show the degree of $C_n$ is at most $n$, we use induction. Suppose deg$(C_l)\le l$, $1\le l<n$. Then one can estimate the degree of $C_n$ directly by \eqref{cn} and \eqref{sirep}. For example, the term \[[uv^i]\Delta(N,m)P_i\sum_{i_1+...+i_m=n-N+m+1}C_{i_1}\cdot\cdot\cdot C_{i_m}\]
has degree at most
\[\Big[\frac{2N-3m-1}2\Big]+(n-N+m+1)=\Big[n-\frac12m+\frac12\Big]\le n.\]
Other terms can be estimated similarly. So  deg$(C_n)\le n$.
\end{proof}

\subsection{The linear terms in the representation of $C_n$}
We have known that $C_n$ is a multivariate polynomial of $A_0,...,A_{n-1}$, which has the form \eqref{add-Cn} and can be determined recursively by \eqref{cn}. However, from the examples presented in Section 1, we see that this multivariate polynomial may have a very complicated structure, since it contains nonlinear terms as well as linear terms. In this section, we will see that at least the linear terms can be understood.

For $i=1,...,n$, let $S_{i,0}(n)$ be the constant term in $S_i(n)$. By \eqref{sirep}, we have
\[\Delta(n+1,0)=Q_{n+1}-(-1)^{n+1}\sigma_{n+1}=S_{n,0}(n)uv^n+...+S_{2,0}(n)uv^2+S_{1,0}(n)uv.\]
Then by \eqref{Pnrep} and \eqref{cn}, the coefficient of the linear term $A_{i-1}$ in $C_n$ equals to
\[3\cdot2^iS_{i,0}(n).\]
We have obtained the coefficient of the linear term $A_{n-1}$ in Proposition \ref{Cn-poly-prop}. However, to determine the explicit formulas of the coefficients for other linear terms, we have to obtain the explicit expansion of $\sigma_{n+1}(j)$ (i.e., the unsigned Stirling number of the first kind $c(j,j-n-1)$). Although it is possible to determine the first several terms in the expansions (see Lemma \ref{sigmaQ} for the first two terms), complete expansions are difficult and unknown.  Fortunately, the sum of these coefficients has simple closed form, which gives the coefficient of $q$ in the expansion of $C_n(q)$ in $q$ (see Remark \ref{remark1}).

\begin{proposition}\label{sumcoeff}
 The sum of the coefficients of the linear terms $A_0,A_1,...,A_{n-1}$ in $C_n$ equals to $(-1)^{n-1}$.
\end{proposition}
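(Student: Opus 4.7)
The plan is to isolate where linear terms in the $A_j$'s can arise in the recursive formula \eqref{cn}, and then recognize the resulting sum as an evaluation of $\Delta(n+1,0)$ at a convenient point.

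First I would note that $S_0(n)$ contributes nothing to the linear part of $C_n$: for $n=1,2$ we have $S_0(n)=0$ by direct computation, while for $n\ge 3$, Proposition \ref{s0} expresses $S_0(n)$ as a sum of products $C_{i_1}C_{i_2}$, and since each $C_k$ is a polynomial in the $A_j$'s with no constant term (as $A_j(0)=0$), any such product contributes only terms of total degree $\ge 2$. Similarly, only the constant term $S_{i,0}(n)$ of each $S_i(n)(C_1,\dots,C_{n-1})$ can combine with the linear part of $P_i$ to yield a linear term in $C_n$, since the non-constant part of $S_i(n)$ is polynomial in the $C_k$'s of order $\ge 1$ and hence contributes only higher-degree terms. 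By \eqref{Pnrep}, the linear part of $P_i$ equals $A_{i-1}$. Therefore the linear part of $C_n$ is exactly
\begin{equation*}
\sum_{i=1}^{n} 3\cdot 2^{i} S_{i,0}(n)\, A_{i-1},
\end{equation*}
and the quantity we must evaluate is $\Sigma_n := \sum_{i=1}^{n} 3\cdot 2^{i} S_{i,0}(n)$.

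Next I would use the identity from the text, $\Delta(n+1,0) = S_{n,0}(n)uv^n + \cdots + S_{1,0}(n)uv$, and observe that the substitution $j=2$ gives $u=2j-1=3$ and $v=j(j-1)=2$, so that $uv^i|_{j=2} = 3\cdot 2^{i}$. Consequently
\begin{equation*}
\Sigma_n \;=\; \Delta(n+1,0)\big|_{j=2} \;=\; Q_{n+1}(2) - (-1)^{n+1}\sigma_{n+1}(2).
\end{equation*}

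Finally I would evaluate each piece explicitly. From \eqref{sigma-rec}, $\sigma_i(2)$ is a sum over strictly increasing tuples from $\{1\}$, so $\sigma_i(2)=0$ for $i\ge 2$; in particular $\sigma_{n+1}(2)=0$ for $n\ge 1$. For $Q_{n+1}(2)$ I would use the generating function \eqref{Q-gen}: at $j=2$,
\begin{equation*}
\sum_{k=0}^{\infty} Q_k(2)\, x^k \;=\; \prod_{i=0}^{1}\frac{1}{1+ix} \;=\; \frac{1}{1+x} \;=\; \sum_{k=0}^{\infty}(-1)^k x^k,
\end{equation*}
so $Q_{n+1}(2) = (-1)^{n+1}$. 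Combining, $\Sigma_n = (-1)^{n+1} = (-1)^{n-1}$, which is the claim.

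The proof is essentially mechanical once the right evaluation point $j=2$ is identified; the only substantive step is verifying that neither $S_0(n)$ nor the non-constant parts of the $S_i(n)$'s can produce linear contributions, which follows cleanly from the vanishing of constant terms in the $A_j$'s and the inductive form \eqref{Pnrep}. There is no real obstacle beyond bookkeeping.
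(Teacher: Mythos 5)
Your proposal is correct and takes essentially the same approach as the paper: both reduce the claim to evaluating $\Delta(n+1,0)=S_{n,0}(n)uv^n+\cdots+S_{1,0}(n)uv$ at $j=2$ (so $u=3$, $v=2$), after noting that the linear part of $C_n$ is $\sum_{i=1}^{n}3\cdot 2^{i}S_{i,0}(n)A_{i-1}$. Your explicit evaluation $Q_{n+1}(2)=(-1)^{n+1}$ and $\sigma_{n+1}(2)=0$ is precisely the paper's observation that at $j=2$ one has $G(n+1,0)=(-1)^{n+1}$ and $H(n+1,0)=0$.
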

\begin{proof}
The sum of the coefficients of the linear terms $A_0,A_1,...,A_{n-1}$ in $C_n$ equals to
\begin{equation}\label{sum}
6S_{1,0}(n)+12S_{2,0}(n)+...+3\cdot 2^{n}S_{n,0}(n).
\end{equation}
Note that
\begin{equation}\label{deltauv}
\Delta(n+1,0)=S_{n,0}(n)uv^n+...+S_{2,0}(n)uv^2+S_{1,0}(n)uv.
\end{equation}
To compute the value of \eqref{sum}, we only need to set $u=3$ and $v=2$, namely $j=2$ in \eqref{deltauv}. Note that when $j=2$
\[G(N,0)=(-1)^N,\ \  H(N,0)=0, \ \ N=2,3,...,\]
So $\Delta(n+1,0)=G(n+1,0)-H(n+1,0)=(-1)^{n+1}$.\end{proof}

Although it is difficult to find $S_{i,0}(n)$ for all $0\le i \le n$, we are able to find explicit formulas for $S_{1,0}(n)$ and $S_{2,0}(n)$, which give us explicit formulas for the coefficients of the linear terms $A_0$ and $A_1$. These results are useful in proving Theorem \ref{thm2}.
\begin{proposition}\label{S10-coeff}
For any $n\ge 1$,
\begin{align}
S_{1,0}(2n)=0, \quad S_{1,0}(2n-1)=\frac{B_{2n}}{n}, \quad S_{2,0}(2n)=-\frac{B_{2n}}{2n}.
\end{align}
\end{proposition}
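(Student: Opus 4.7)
The proof relies on a local Taylor expansion of $\Delta(n+1,0)$ at $j=1$ combined with a generating function identity. From Proposition~\ref{Delta-uv-exp}, $\Delta(n+1,0) = u\bigl(S_{1,0}(n)v + S_{2,0}(n)v^2 + \cdots\bigr)$. Substituting $j=1+s$, so that $u = 1+2s$ and $v = s+s^2$, one obtains
\[ \Delta(n+1,0) = S_{1,0}(n)\,s + \bigl(3S_{1,0}(n)+S_{2,0}(n)\bigr)s^2 + O(s^3). \]
Thus $S_{1,0}(n) = \partial_j \Delta(n+1,0)\big|_{j=1}$, and whenever $S_{1,0}(n)=0$ one has $S_{2,0}(n) = \tfrac12\,\partial_j^2 \Delta(n+1,0)\big|_{j=1}$. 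The plan is therefore to compute these two derivatives uniformly in $n$ via a generating series.

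Set $\tilde A(j,x):=\sum_{k}Q_k(j)x^k$, $\tilde B(j,x):=\sum_{k}(-1)^k\sigma_k(j)x^k$, and $D(j,x):=\tilde A-\tilde B$; then $\Delta(n+1,0)=[x^{n+1}]D(j,x)$. Taking logarithms of the product expressions \eqref{A-def}--\eqref{B-def} term by term in $i$ yields
\[ \log \tilde A(j,x) = \sum_{m\ge 1}(-1)^m\frac{p_m(j)}{m}x^m, \qquad \log \tilde B(j,x) = -\sum_{m\ge 1}\frac{p_m(j)}{m}x^m, \]
where $p_m(j)=\sum_{i=1}^{j-1}i^m$ is Faulhaber's polynomial. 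Since $p_m(1)=0$ for every $m\ge 1$, we have $\tilde A(1,x)=\tilde B(1,x)=1$, which dramatically simplifies the derivative evaluations at $j=1$.

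Applying $\partial_j \tilde B = \tilde B\cdot\partial_j \log\tilde B$ (and the analogue for $\tilde A$) at $j=1$, the odd-$m$ terms cancel between $\tilde A$ and $\tilde B$, leaving
\[ \partial_j D\big|_{j=1} = 2\sum_{m\,\mathrm{even}}\frac{p_m'(1)}{m}x^m. \]
Differentiating Faulhaber's formula \eqref{p-m} and invoking the standard identity $\sum_{i=0}^{m}\binom{m}{i}B_i=B_m$ (valid for $m\ge 2$), a short computation gives $p_m'(1)=B_m$ for $m\ge 2$. Extracting $[x^{n+1}]$ now yields $S_{1,0}(2n)=0$ (only even powers of $x$ occur on the right) and $S_{1,0}(2n-1)=2B_{2n}/(2n)=B_{2n}/n$, proving the first two equalities.

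For $S_{2,0}(2n)$, the second-order identity $\partial_j^2 \tilde B = \tilde B\bigl((\partial_j\log\tilde B)^2+\partial_j^2\log\tilde B\bigr)$ and its analogue for $\tilde A$, evaluated at $j=1$, combine to give (writing $P(x):=\sum_{m\ge 1}p_m'(1)x^m/m$ with odd/even splitting $P=P_{\mathrm{odd}}+P_{\mathrm{even}}$)
\[ \partial_j^2 D\big|_{j=1} = 2\sum_{m\,\mathrm{even}}\frac{p_m''(1)}{m}x^m - 4\,P_{\mathrm{odd}}(x)\,P_{\mathrm{even}}(x). \]
The first sum has only even powers of $x$, so $[x^{2n+1}]$ picks up a contribution only from the cross term. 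The key simplification is that $B_m=0$ for odd $m\ge 3$ collapses $P_{\mathrm{odd}}(x)=p_1'(1)\,x = x/2$ to a single monomial; consequently $[x^{2n+1}]P_{\mathrm{odd}}P_{\mathrm{even}} = (1/2)\cdot B_{2n}/(2n)$, which yields $S_{2,0}(2n)=-B_{2n}/(2n)$. The main obstacle is the parity bookkeeping that forces the odd-$m$ contributions to cancel when differencing $\tilde A$ and $\tilde B$; once this symmetry is extracted, the collapse of $P_{\mathrm{odd}}$ to a single term makes the final extraction essentially a one-line computation.
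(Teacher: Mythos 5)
Your proof is correct, and it takes a genuinely different route from the paper's. The paper works from the convolution recurrence $\Delta(n,0)=-\sum_{m}\Delta(m,0)\sigma_{n-m}-\sum_{m}(-1)^m\sigma_m\sigma_{n-m}$ (derived from \eqref{Q-rec}), kills terms by tracking divisibility by powers of $v$, and extracts the coefficients of $j$ and $j^2$, obtaining the Bernoulli numbers through Newton's identities via $[j]\sigma_{2m}=-B_{2m}/(2m)$; the relation $S_{2,0}(2n)=-\tfrac12 S_{1,0}(2n-1)$ then comes from the $uv^2$ coefficient of the odd-index convolution. You instead expand at $j=1$ (equivalent, by the $j\mapsto 1-j$ symmetry underlying Proposition \ref{Delta-uv-exp}, to the paper's expansion at $j=0$), observe that $uv=s+3s^2+O(s^3)$ and $uv^i=s^i+O(s^{i+1})$ identify $S_{1,0}(n)$ and $3S_{1,0}(n)+S_{2,0}(n)$ with the first two Taylor coefficients, and compute both derivatives at once by logarithmic differentiation of the generating products, exploiting $p_m(1)=0$ and $p_m'(1)=B_m$ (which I verified: $p_m'(1)=\sum_{i=0}^m(-1)^i\binom{m}{i}B_i-m=B_m$ for $m\ge2$). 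This is arguably cleaner: one generating-function computation yields all three claims, the parity cancellation is transparent, and the collapse of $P_{\mathrm{odd}}$ to $x/2$ replaces the paper's separate Bernoulli convolution identity \eqref{j2-convo}. The only point you should make explicit is that $\log\tilde A$ and $\log\tilde B$ are formal power series in $x$ whose coefficients are polynomials in $j$ agreeing with $\sum_m(\mp1)^m p_m(j)x^m/m$ at all integers $j\ge1$, hence identically, so that differentiation in $j$ is legitimate; this is the same polynomial-extension device the paper uses in Lemma \ref{replace}, so it is a routine remark rather than a gap.
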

\begin{proof}
Since  $\Delta(n,0)=Q_{n}-(-1)^{n}\sigma_{n}$, we have $Q_{n}=\Delta(n,0)+(-1)^{n}\sigma_{n}$. By \eqref{Q-rec} we have
\begin{align*}
(-1)^n\sigma_{n}+\Delta(n,0)=-\sum_{m=0}^{n-1}\sigma_{n-m}\left((-1)^m\sigma_{m}+\Delta(m,0) \right).
\end{align*}
This implies
\begin{align}\label{Delta}
\Delta(n,0)=-\sum_{m=0}^{n-1}\Delta(m,0)\sigma_{n-m}-\sum_{m=0}^{n}(-1)^{m}\sigma_{m}\sigma_{n-m}.
\end{align}
Note that for any $n\ge 1$, $\sigma_{n}$ is divisible by $v=j(j-1)$. Since $\Delta(0,0)=\Delta(1,0)=0$,  from \eqref{Delta}, we know $\Delta(n,0)$ is divisible by $v$.
Replacing $n$ by $2n+1$ in \eqref{Delta} and observing that
\begin{align*}
\sum_{m=0}^{2n+1}(-1)^{m}\sigma_{m}\sigma_{2n+1-m}=\sum_{m=0}^{n}\left((-1)^{m}\sigma_{m}\sigma_{2n+1-m}+(-1)^{2n+1-m}\sigma_{2n+1-m}\sigma_{m} \right)=0,
\end{align*}
we obtain
\begin{align}\label{Delta-odd}
\Delta(2n+1,0)=-\sum_{m=0}^{2n}\Delta(m,0)\sigma_{2n+1-m}.
\end{align}
Since both $\Delta(m,0)$ and $\sigma_{2n+1-m}$ are divisible by $v$, we know that $\Delta(2n+1,0)$ is divisible by $v^2$. Therefore, from \eqref{deltauv} we know $S_{1,0}(2n)=0$.

Furthermore, from Newton's identities, we have
\begin{align}\label{sigma-p}
m\sigma_{m}=\sum_{k=1}^{m}(-1)^{k-1}p_{k}\sigma_{m-k}.
\end{align}
For each $1\le k \le m-1$, $p_{k}\sigma_{m-k}$ is divisible by $v^2$. It is well known that if $m\ge 3$ is odd, then $p_{m}$ is divisible by $v^2$. Hence \eqref{sigma-p} implies that $\sigma_{2k+1}$ is divisible by $v^2$ for any $k\ge 1$. Now we compare the coefficients of $uv^2$ in both sides of \eqref{Delta-odd}. For the left hand side, it is clearly equal to $S_{2,0}(2n)$. For the right hand side, if $0\le m \le 2n-1$ is odd, then $\Delta(m,0)$ is divisible by $v^2$ and $\sigma_{2n+1-m}$ is divisible by $v$.  If $0\le m \le 2n-1$ is even, then $\Delta(m,0)$ is divisible by $v$ and $\sigma_{2n+1-m}$ is divisible by $v^2$. Hence for any $0\le m \le 2n-1$, $\Delta(m,0)\sigma_{2n+1-m}$ is always divisible by $v^3$. Thus the term $uv^2$ only appears in $\Delta(2n,0)\sigma_1$, and hence equals to $-\frac{1}{2}S_{1,0}(2n-1)uv^2$. Thus we obtain
\begin{align}\label{S20-S10-odd-relation}
S_{2,0}(2n)=-\frac{1}{2}S_{1,0}(2n-1).
\end{align}

Now we determine $S_{1,0}(2n-1)$.
Replacing $n$ by $2n$ in \eqref{Delta}, we obtain
\begin{align}\label{Delta-even}
\Delta(2n,0)=-2\sigma_{2n}-\sum_{m=1}^{2n-1}(-1)^{m}\sigma_{m}\sigma_{2n-m}-\sum_{m=0}^{2n-1}\Delta(m,0)\sigma_{2n-m}.
\end{align}
Comparing the coefficient of $j$ on both sides, we obtain
\begin{align}\label{S10-sigma}
S_{1,0}(2n-1)=-2[j]\sigma_{2n}.
\end{align}

From \eqref{p-m} we know that
\begin{align}\label{j-pm}
[j]p_{m}=(-1)^mB_{m}, \quad m\ge 2.
\end{align}
From \eqref{sigma-p} we get
\begin{align}
m[j]\sigma_{m}=-B_{m}, \quad m \ge 2.
\end{align}
Replacing $m$ by $2m$, we obtain
\begin{align}\label{j-sigma}
[j]\sigma_{2m}=-\frac{B_{2m}}{2m}, \quad m\ge 1.
\end{align}
Substituting \eqref{j-sigma} into \eqref{S10-sigma}, we complete the proof.
\end{proof}

\begin{proposition}\label{S20-coeff}
For any $n\ge 2$,
\begin{align}
S_{2,0}(2n-1)=-\frac{3B_{2n}}{n}.
\end{align}
\end{proposition}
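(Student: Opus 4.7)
The plan is to adapt the strategy of the proof of Proposition \ref{S10-coeff}, but now tracking the coefficient of $j^2$ in the identity
\begin{equation*}
\Delta(2n, 0) = -2\sigma_{2n} - \sum_{m=1}^{2n-1}(-1)^m\sigma_m\sigma_{2n-m} - \sum_{m=0}^{2n-1}\Delta(m, 0)\sigma_{2n-m}
\end{equation*}
that was derived there.

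First I would evaluate $[j^2]\Delta(2n,0)$ using the $(u,v)$-expansion $\Delta(2n,0) = \sum_{k\ge 1} S_{k,0}(2n-1)\,uv^k$. A quick computation gives $[j^2](uv) = -3$, $[j^2](uv^2) = -1$, and $[j^2](uv^k) = 0$ for $k\ge 3$ (the lowest-degree term of $uv^k$ is of order $j^k$), so
\begin{equation*}
[j^2]\Delta(2n,0) = -3\,S_{1,0}(2n-1) - S_{2,0}(2n-1).
\end{equation*}

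Next I would evaluate $[j^2]$ of the right-hand side of the above identity. The divisibility facts established in the proof of Proposition \ref{S10-coeff}---that $v$ divides $\sigma_m$ (for $m\ge 1$) and $\Delta(m,0)$ (for $m\ge 2$), while $v^2$ divides $\sigma_{2l+1}$ and $\Delta(2l+1,0)$ for $l\ge 1$---show that once $n\ge 2$, the only summands contributing to $[j^2]$ are those where both indices are even and at least $2$. Using $[j]\sigma_{2l} = -B_{2l}/(2l)$ and $[j]\Delta(2l,0) = S_{1,0}(2l-1) = B_{2l}/l$ from Proposition \ref{S10-coeff} then yields
\begin{equation*}
[j^2]\Delta(2n,0) = -2\,[j^2]\sigma_{2n} + \sum_{l=1}^{n-1}\frac{B_{2l}\,B_{2(n-l)}}{4\,l\,(n-l)}.
\end{equation*}
To evaluate $[j^2]\sigma_{2n}$, I would apply Newton's identity $2n\,\sigma_{2n} = \sum_{k=1}^{2n}(-1)^{k-1}p_k\,\sigma_{2n-k}$ and extract the $j^2$ coefficient. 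The same divisibility reasoning, combined with $[j]p_{2l} = B_{2l}$ (and $B_{2l+1}=0$ for $l\ge 1$), restricts the contributing terms to $k$ even with $2\le k\le 2n-2$, giving $2n\,[j^2]\sigma_{2n} = \sum_{l=1}^{n-1}B_{2l}\,B_{2(n-l)}/(2(n-l))$. Substituting back collapses the two expressions into
\begin{equation*}
[j^2]\Delta(2n,0) = \sum_{l=1}^{n-1}\frac{B_{2l}\,B_{2(n-l)}\,(n-2l)}{4\,n\,l\,(n-l)},
\end{equation*}
and the involution $l\mapsto n-l$ negates each summand (the fixed point $l=n/2$, when $n$ is even, contributes $0$ for free), so the whole sum vanishes. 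Thus $[j^2]\Delta(2n,0)=0$, and comparing with the first display together with $S_{1,0}(2n-1)=B_{2n}/n$ from Proposition \ref{S10-coeff} gives $S_{2,0}(2n-1)=-3\,B_{2n}/n$.

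The main obstacle will be the careful bookkeeping in the middle step: correctly identifying which of the many terms in the two sums actually contribute to $[j^2]$ (treating the boundary indices $m\in\{1,2n-1,2n\}$ and the parity cases separately, which is why the identity fails for $n=1$, where $[j]\sigma_{2n-1}=[j]\sigma_1=-1/2$ is no longer zero), and keeping the signs straight so that the final expression collapses via the $l\leftrightarrow n-l$ symmetry.
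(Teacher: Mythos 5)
Your proposal is correct and follows essentially the same route as the paper: both extract the coefficient of $j^2$ from the identity for $\Delta(2n,0)$, use the parity/divisibility facts from Proposition \ref{S10-coeff} to isolate the even-index terms, and compute $[j^2]\sigma_{2n}$ via Newton's identities. The only (cosmetic) difference is at the final step, where the paper verifies the resulting Bernoulli convolution identity by the symmetrization $\tfrac{1}{2m(2n-2m)}=\tfrac{1}{2n}\bigl(\tfrac{1}{2m}+\tfrac{1}{2n-2m}\bigr)$ while you collapse the combined sum by the involution $l\mapsto n-l$.
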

\begin{proof}
From Proposition \ref{S10-coeff} we know it suffices to show that
\begin{align}\label{S20-S10-relation}
S_{2,0}(2n-1)+3S_{1,0}(2n-1)=0.
\end{align}
We observe that
\begin{align*}
[j^2]\left(a_{2}uv^2+a_{1}uv\right)=[j^2]\left(a_{2}j^2(j-1)^2(2j-1)+a_{1}j(j-1)(2j-1)\right)=-a_2-3a_1.
\end{align*}
Comparing the coefficients of $j^2$ on both sides of \eqref{Delta-even}, we deduce that
\begin{align}
&-S_{2,0}(2n-1)-3S_{1,0}(2n-1)\nonumber \\
=&-2[j^2]\sigma_{2n}-\sum_{m=1}^{2n-1}(-1)^m[j]\sigma_{m}\cdot [j]\sigma_{2n-m}-\sum_{m=0}^{2n-1}S_{1,0}(m-1)[j]\sigma_{2n-m} \nonumber \\
=&-2[j^2]\sigma_{2n}-\sum_{m=1}^{n-1}[j]\sigma_{2m}[j]\sigma_{2n-2m}-\sum_{m=1}^{n-1}S_{1,0}(2m-1)[j]\sigma_{2n-2m} \nonumber\\
=&-2[j^2]\sigma_{2n}+\sum_{m=1}^{n-1}[j]\sigma_{2m}\cdot [j]\sigma_{2n-2m},
\end{align}
where in the last equality we used \eqref{S10-sigma}. Hence the proposition is equivalent to the assertion that for any $n\ge 2$,
\begin{align}\label{j2-convo}
2[j^2]\sigma_{2n}=\sum_{m=1}^{n-1}[j]\sigma_{2m}\cdot [j]\sigma_{2n-2m}.
\end{align}
From \eqref{p-m} we deduce that
\begin{align*}
[j^2]p_{m}=\frac{(-1)^{m-1}}{2}mB_{m-1}.
\end{align*}
Hence for $m\ge 2$,
\begin{align}\label{j2-pm}
[j^2]p_{2m}=0.
\end{align}
From \eqref{sigma-p} we deduce that for $n\ge 2$,
\begin{align*}
2n[j^2]\sigma_{2n}&=-[j^2]p_{2n}+\sum_{k=2}^{2n-1}(-1)^{k-1}[j]p_{k}[j]\sigma_{2n-k}.
\end{align*}
Using \eqref{j-pm}, \eqref{j-sigma} and \eqref{j2-pm},  we obtain
\begin{align}\label{j2-last}
[j^2]\sigma_{2n}=\frac{1}{2n}\sum_{k=1}^{n-1}\frac{B_{2k}B_{2n-2k}}{2n-2k}.
\end{align}
From \eqref{j-sigma} we have
\begin{align}\label{j-convo}
&\sum_{m=1}^{n-1}[j]\sigma_{2m}\cdot [j]\sigma_{2n-2m}\nonumber \\
=&\sum_{m=1}^{n-1}\frac{B_{2m}}{2m}\cdot \frac{B_{2n-2m}}{2n-2m} \nonumber\\
=&\sum_{m=1}^{n-1}B_{2m}B_{2n-2m}\frac{1}{2n}\left(\frac{1}{2m}+\frac{1}{2n-2m} \right) \nonumber \\
=&\frac{1}{n}\sum_{m=1}^{n-1}\frac{B_{2m}B_{2n-2m}}{2n-2m}.
\end{align}
Comparing \eqref{j2-last} and \eqref{j-convo}, we complete the proof of \eqref{j2-convo} and the proposition.
\end{proof}

\subsection{Alternative representations of $C_{n}$}
The representations of $C_{n}$ are not unique. Indeed, it is possible to represent $C_{n}$ using only $A_{0}$, $A_1$ and $A_2$. For this we need to know the relation between $A_3$ and $A_0,A_1,A_2$.
Recall that $E_2, E_4$ and $E_6$ denotes three Eisenstein series as given in \eqref{P-defn}--\eqref{R-defn}.
The following identities of Ramanujan are well-known (see \cite[Theorem 4.2.3]{Berndt}, for example.):
\begin{align}
\Theta(E_2)&=\frac{E_{2}^2-E_{4}}{12}, \label{Rama-1}\\
\Theta(E_4)&=\frac{E_2E_4-E_6}{3}, \label{Rama-2}\\
\Theta(E_6)&=\frac{E_2E_6-E_{4}^2}{2}. \label{Rama-3}
\end{align}
We first express $E_2$, $E_4$ and $E_6$ in terms of $A_0$, $A_1$ and $A_2$.
\begin{proposition}\label{PQR-A}
We have
\begin{align}
E_2&=1-24A_0, \label{E2-exp}\\
E_4&=1-48A_0+576A_0^2+288A_1, \label{E4-exp} \\
E_6&=1-72A_0+1728A_0^2-13824A_0^3+432A_1-10368A_0A_1-864A_2. \label{E6-exp}
\end{align}
\end{proposition}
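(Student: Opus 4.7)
The strategy is to first read off $E_2=1-24A_0$ directly from the definitions, and then bootstrap $E_4$ and $E_6$ out of Ramanujan's identities \eqref{Rama-1}--\eqref{Rama-2} by solving them algebraically for $E_4$ and $E_6$, avoiding any need to evaluate $\sum n^3q^n/(1-q^n)$ or $\sum n^5 q^n/(1-q^n)$ as $q$-series directly.

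The first step is to expand the geometric series
\[
\sum_{n=1}^\infty \frac{nq^n}{1-q^n}=\sum_{n=1}^\infty n\sum_{k=1}^\infty q^{nk}=\sum_{m=1}^\infty\Big(\sum_{d\mid m}d\Big)q^m=\sum_{m=1}^\infty \sigma(m)q^m=A_0,
\]
which, plugged into \eqref{P-defn}, yields \eqref{E2-exp}.

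The second step is to rearrange \eqref{Rama-1} and \eqref{Rama-2} as
\[
E_4=E_2^2-12\,\Theta(E_2),\qquad E_6=E_2E_4-3\,\Theta(E_4).
\]
Since $\Theta(A_i)=A_{i+1}$ by the definition of $A_i$ and the action of $\Theta$, we get $\Theta(E_2)=-24A_1$ immediately, and then
\[
E_4=(1-24A_0)^2+288A_1=1-48A_0+576A_0^2+288A_1,
\]
which is \eqref{E4-exp}. Applying $\Theta$ once more (using that $\Theta$ is a derivation, so $\Theta(A_0^2)=2A_0A_1$) gives $\Theta(E_4)=-48A_1+1152A_0A_1+288A_2$. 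Substituting into $E_6=E_2E_4-3\,\Theta(E_4)$ and collecting terms produces \eqref{E6-exp}.

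The only real work is the final expansion of $(1-24A_0)(1-48A_0+576A_0^2+288A_1)-3(-48A_1+1152A_0A_1+288A_2)$, which is routine polynomial arithmetic; no genuine obstacle arises. The key conceptual point is simply that Ramanujan's identities let us avoid ever computing the divisor-power sums $\sigma_3$ and $\sigma_5$ directly, replacing that computation with one derivation and one multiplication in $\mathbb{Q}[A_0,A_1,A_2]$.
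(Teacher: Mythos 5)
Your proof is correct and follows essentially the same route as the paper: read off $E_2=1-24A_0$ from the Lambert series, then use \eqref{Rama-1} with $\Theta(A_i)=A_{i+1}$ to get $E_4$, and \eqref{Rama-2} to get $E_6$, arriving at the same intermediate identity $E_6=E_2E_4+144A_1-3456A_0A_1-864A_2$ before the final expansion. The only cosmetic difference is that you compute $\Theta(E_4)$ by differentiating the already-established formula \eqref{E4-exp}, whereas the paper differentiates \eqref{Rama-1} and identifies $\Theta^2(E_2)=-24A_2$; these are the same calculation organized slightly differently.
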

\begin{proof}
The relation \eqref{E2-exp} follows from definition.

Since $\Theta(A_0)=A_1$, \eqref{Rama-1} implies
\begin{align*}
E_4=E_{2}^2-12\Theta(E_2)=E_{2}^2+288A_1=1-48A_0+576A_0^2+288A_1.
\end{align*}
This proves \eqref{E4-exp}.

Next, we have
\begin{align}\label{P2}
\Theta(E_{2}^2)=\Theta(1-48A_0+576A_0^2)=-48A_1+1152A_0A_1.
\end{align}
Applying $\Theta$ to both sides of \eqref{Rama-1}, by \eqref{Rama-2} and \eqref{P2} we obtain
\begin{align*}
\Theta^2(E_2)&=\frac{1}{12}\Theta(E_{2}^2-E_4)\\
&=\frac{1}{12}\Theta(E_{2}^2)-\frac{1}{36}(E_2E_4-E_6) \\
&=96A_0A_1-4A_1-\frac{1}{36}(E_2E_4-E_6).
\end{align*}
On the other hand, we have
\begin{align}
\Theta^2(E_2)=-24\Theta^2(A_0)=-24A_2.
\end{align}
So we deduce that
\begin{align}\label{A2-exp}
A_2=-4A_0A_1+\frac{1}{6}A_1+\frac{1}{864}(E_2E_4-E_6).
\end{align}
This implies
\begin{align}\label{E6-exp-1}
E_6=E_2E_4-3456A_0A_1+144A_1-864A_2.
\end{align}
Substituting \eqref{E2-exp} and \eqref{E4-exp} into \eqref{E6-exp-1} and simplifying, we obtain
\eqref{E6-exp}.
\end{proof}
From Proposition \ref{PQR-A}, it is easy to express $A_0,A_1$ and $A_2$ as polynomials in $E_2,E_4$ and $E_6$.
\begin{corollary}\label{E-A-repn}
We have
\begin{align*}
A_0&=\frac{1}{24}(1-E_2), \\
A_1&=\frac{1}{288}(E_4-E_{2}^2), \\
A_2&=-\frac{1}{1728}(E_{2}^3-3E_2E_4+2E_6).
\end{align*}
\end{corollary}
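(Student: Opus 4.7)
The plan is simply to invert the linear-plus-polynomial system produced by Proposition \ref{PQR-A}; the three relations there express $E_2,E_4,E_6$ as explicit polynomials in $A_0,A_1,A_2$, and since the ``leading coefficient'' in each relation is a nonzero constant, the inversion is a triangular substitution that can be carried out one variable at a time.

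First I would read off $A_0$ directly from \eqref{E2-exp}: solving $E_2=1-24A_0$ gives $A_0=(1-E_2)/24$. Next I would substitute this value of $A_0$ into \eqref{E4-exp}. Observing that $48A_0=2(1-E_2)$ and $576A_0^2=(1-E_2)^2$, the combination $1-48A_0+576A_0^2$ collapses to $E_2^2$, so \eqref{E4-exp} reduces to $E_4=E_2^2+288A_1$, yielding $A_1=(E_4-E_2^2)/288$.

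For $A_2$ I would not substitute directly into \eqref{E6-exp}, which would force me to simplify a large polynomial; instead I would start from the intermediate identity \eqref{A2-exp}, namely
\begin{equation*}
A_2=-4A_0A_1+\tfrac{1}{6}A_1+\tfrac{1}{864}(E_2E_4-E_6),
\end{equation*}
which is already linear in $A_0,A_1$. Plugging in the expressions for $A_0$ and $A_1$ just obtained, the two terms $-4A_0A_1+\tfrac16 A_1$ combine over a common denominator of $1728$ and, after factoring out $E_4-E_2^2$, telescope to $(E_2E_4-E_2^3)/1728$. Adding $\tfrac{1}{864}(E_2E_4-E_6)=(2E_2E_4-2E_6)/1728$ gives the claimed formula $A_2=-(E_2^3-3E_2E_4+2E_6)/1728$.

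There is really no conceptual obstacle here; the only place where an error is likely is in the sign-keeping and fraction arithmetic of the final combination for $A_2$, so I would double-check the cancellation $-(1-E_2)(E_4-E_2^2)+(E_4-E_2^2)=E_2(E_4-E_2^2)$ before combining with $(E_2E_4-E_6)/864$. Note that once the three formulas are established, algebraic independence of $E_2,E_4,E_6$ together with Proposition \ref{PQR-A} immediately gives algebraic independence of $A_0,A_1,A_2$ as claimed in the introduction.
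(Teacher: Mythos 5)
Your proposal is correct and follows the same route the paper intends: the corollary is obtained by inverting the triangular system of Proposition \ref{PQR-A}, exactly as you do, and your arithmetic (including the cancellation giving $E_2(E_4-E_2^2)/1728$ and the final combination with $\tfrac{1}{864}(E_2E_4-E_6)$) checks out. Using the intermediate identity \eqref{A2-exp} instead of substituting into \eqref{E6-exp} is only a minor labor-saving variant of the same inversion.
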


\begin{lemma}\label{An-poly-lem}
For any $n\ge 1$, $A_n$ can be written as a multivariate polynomial in $A_0$, $A_1$ and $A_2$ with integer coefficients and degree at most $n-1$. In particular, we have
\begin{align}\label{A3-poly}
A_3=A_2+36A_1^2-24A_0A_2.
\end{align}
\end{lemma}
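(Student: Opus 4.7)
The plan is to first establish the explicit identity $A_3=A_2+36A_1^2-24A_0A_2$, then bootstrap the general statement via induction on $n$ using the fact that $A_{n+1}=\Theta(A_n)$.

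For the base identity, I would verify both sides as polynomials in $E_2,E_4,E_6$ and use the algebraic independence of $E_2,E_4,E_6$ over $\mathbb{C}$ to conclude equality. Concretely, starting from $A_2=(-E_2^3+3E_2E_4-2E_6)/1728$ from Corollary \ref{E-A-repn}, apply $\Theta$ via the product rule and replace $\Theta(E_2),\Theta(E_4),\Theta(E_6)$ by the Ramanujan identities \eqref{Rama-1}--\eqref{Rama-3}. After collecting terms one obtains
\begin{equation*}
A_3 = \Theta(A_2) = \frac{-E_2^4+6E_2^2E_4+3E_4^2-8E_2E_6}{6912}.
\end{equation*}
Independently, use Corollary \ref{E-A-repn} to expand $A_2+36A_1^2-24A_0A_2$; after reducing the three terms to the common denominator $6912$, the result matches the displayed polynomial for $A_3$ coefficient-by-coefficient. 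This establishes the identity.

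For the inductive step, suppose $A_n = P(A_0,A_1,A_2)$ with $P\in\mathbb{Z}[X_0,X_1,X_2]$ of total degree at most $n-1$; the cases $n=1,2$ are immediate. Since $\Theta$ is a derivation and $\Theta(A_i)=A_{i+1}$, the chain rule gives
\begin{equation*}
A_{n+1} = (\partial_{X_0}P)(A_0,A_1,A_2)\,A_1 + (\partial_{X_1}P)(A_0,A_1,A_2)\,A_2 + (\partial_{X_2}P)(A_0,A_1,A_2)\,A_3.
\end{equation*}
Substituting the base identity for $A_3$ in the last summand yields an integer-coefficient polynomial in $A_0,A_1,A_2$, since differentiation with respect to $X_i$ and multiplication preserve integer coefficients. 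Each partial derivative has degree at most $n-2$; the first two summands therefore contribute degree at most $n-1$, and the third, being multiplied by something of degree $2$, contributes degree at most $n$. Hence $\deg A_{n+1}\leq n$, closing the induction.

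The main obstacle is the bookkeeping for the base identity: one must carry out the polynomial computation in $E_2,E_4,E_6$ on both sides and confirm that the constants $36$ and $-24$ are the correct integers that cause cancellation. Once the identity $A_3=A_2+36A_1^2-24A_0A_2$ is in hand, the induction is essentially automatic, and the fact that this specific substitution for $A_3$ has integer coefficients is precisely what makes integrality propagate through all $A_n$.
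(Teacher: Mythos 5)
Your proof is correct and follows essentially the same route as the paper: establish \eqref{A3-poly} by applying $\Theta$ to $A_2$, using Ramanujan's identities \eqref{Rama-1}--\eqref{Rama-3} and the dictionary between $A_0,A_1,A_2$ and $E_2,E_4,E_6$, then induct via $A_{m+1}=\Theta(A_m)$. (Minor remark: the appeal to algebraic independence is superfluous here --- once both sides are seen to equal the same polynomial in $E_2,E_4,E_6$, they are equal as $q$-series; independence is only needed for the uniqueness claim in Theorem \ref{thm2}.)
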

\begin{proof}
Applying the $\Theta$ operator to both sides of \eqref{A2-exp}, upon using \eqref{Rama-1}--\eqref{Rama-3} and simplifying, we obtain
\begin{align*}
A_3&=-4\Theta(A_0A_1)+\frac{1}{6}\Theta(A_1)+\frac{1}{864}\Theta(E_2E_4-E_6) \\
&=-4A_0A_2-4A_1^2+\frac{1}{6}A_2+\frac{1}{864}\left(\Theta(E_2)E_4+E_2\Theta(E_4)-\Theta(E_6) \right) \\
&=-4A_0A_2-4A_1^2+\frac{1}{6}A_2+\frac{5}{10368}\left(E_{2}^2E_4+E_{4}^2-2E_{2}E_{6} \right).
\end{align*}
Now substituting \eqref{E2-exp}--\eqref{E6-exp} into the above identity and simplifying, we obtain \eqref{A3-poly}.

Since $A_{m+1}=\Theta(A_m)$, the first assertion follows by using \eqref{A3-poly} and induction on $m$.
\end{proof}
Finally, we present a proof of Theorem \ref{thm2}.
\begin{proof}[Proof of Theorem \ref{thm2}]
From Lemma \ref{An-poly-lem} and Proposition \ref{Cn-poly-prop}, we know that $C_{n}$ can be represented as a polynomial of $A_0, A_1$ and $A_2$. For the uniqueness,  it is known that $E_2$, $E_4$ and $E_6$ are algebraically independent over $\mathbb{C}$ (see \cite[Lemma 117]{Martin}, for example). Therefore, Corollary \ref{E-A-repn} implies that $A_0$, $A_1$ and $A_2$ are also algebraically independent over $\mathbb{C}$. Hence the expression of $C_{n}$ as a polynomial in $A_0$, $A_1$ and $A_2$ is unique. From Theorem \ref{thm1} and Lemma \ref{An-poly-lem}, it is easy to see that all the coefficients are rational numbers.

From Lemma \ref{An-poly-lem}, we see that $A_n=A_2+{\rm higher\ degree\ terms}$ for all $n\ge 3$. So Proposition \ref{sumcoeff} still holds for this representation of $C_n$ in $A_0, A_1$ and $A_2$, and the coefficients of $A_0$ and $A_1$ do not change. Therefore, the coefficients of the linear terms $A_0$, $A_1$, $A_2$ in this representation of $C_n$ are $6S_{1,0}(n)$, $12S_{2,0}(n)$, $(-1)^{n-1}-6S_{1,0}(n)-12S_{2,0}(n)$, respectively. Since $S_{1,0}(n)$ and $S_{2,0}(n)$ are given explicitly in Propositions \ref{S10-coeff} and \ref{S20-coeff}, we complete our proof of Theorem \ref{thm2}.
\end{proof}

\bibliographystyle{plain}

\end{document}